\newtheorem{theo}{Theorem}[section]
\newtheorem{prop}{Proposition}[section]
\newtheorem{lem}{Lemma}[section]
\theoremstyle{definition}
\newtheorem{defi}{Definition}[section]
\theoremstyle{remark}
\newtheorem{rem}{Remark}[section]
\date{}
\begin{document}

\relax

\title{$\mathfrak{g}_{0}$-Geometry and Its Relationship with Stable Representations}
\author[]{Shang Xu}
\date{ \today }
	\address{Dept. of Mathematics, Nankai University, TianJin, 300071, China}
	\email{shangxu086@gmail.com}
\maketitle
\begin{abstract}
\noindent This paper explores the sheaves with the action of a lie algebra and computes their cohomology in a new category. Then in the following sections, We try to generalize a classical result in [GM, Ch. IV] about exterior algebra. We add the action of $\mathfrak{g}_{0}$ and prove that there is still a faithful functor.
\end{abstract}

\tableofcontents

\section{Introduction}
In [GM, Ch.IV, \S 3], a fully faithful functor $\Phi$ is constructed for the Grassmannian algebra $\Lambda E$ from its stable module category to the derived category of coherent sheaves on projective plane. It's an easy observation that $\Lambda E$ is the universal enveloping algebra for the trivial lie superalgebra $\mathfrak{g}$ satisfying that $\mathfrak{g}_{0}=0,\,\mathfrak{g}_{1}=E$ with projective self-commuting $\mathcal{P}(E)$, so it's natural to consider the generalization of this result to a lie superalgebra. In order to obtain complexes of sheaves and to generalize the graded module category for Grassmannian algebra, we consider the lie superalgebra with $[\mathfrak{g}_{1},\mathfrak{g}_{1}]=0$. Furthermore, we need $\mathfrak{g}_{0}$ to be semisimple to apply results in \cite{DS}.

Let we conclude as follows: the aim of this paper is to simulate the method in that book and generalize the conclusion to a lie superalgebra $\mathfrak{g}=\mathfrak{g}_{0}\oplus\mathfrak{g}_{1}$ with $[\mathfrak{g}_{1},\mathfrak{g}_{1}]=0$ and $\mathfrak{g}_{0}$ semisimple. More explicitly, we will generalize the graded $\Lambda$-module category to the graded $\mathfrak{g}$-module category and find some geometric interpretation of this new category, namely the $\mathfrak{g}_{0}$-rigid complexes of coherent sheaves on $\mathcal{P}(\mathfrak{g}_{1})$ in section 4.2. The key point here is to introduce $\mathfrak{g}_{0}$ action on sheaves and introduce a new structure sheaf $\mathscr{R}_{X}$ to understand the category of sheaves with $\mathfrak{g}_{0}$-action, namely, the category of $\mathscr{R}_{X}$-modules. This part is particularly standard and gives us a general way to deal with the lie algebra action on sheaves, the significance of which may go beyond our original question. Therefore, it worth a whole chapter to describe what we have got in "$\mathfrak{g}_{0}$-algebraic geometry", and we put everything in section 2.2 and section 3. We also compute sheaf cohomology and other the $\mathrm{Ext}$ functor in the category of sheaves with $\mathfrak{g}_{0}$-action. To some extent, section 3.2 is the rewriting of [HR, Ch.III] for our structure sheaf $\mathscr{R}_{X}$.

Finally, in section 6 we will consider the more general case and reveal the relationship between our complex and the localization of DS functor [DS, \S 11]. Now in the next section, we will make some preparations for section 3 to introduce the $\mathfrak{g}_{0}$ geometry.

\textbf{Acknowledgement} I extend my heartfelt gratitude to Professor Vera Serganova for her invaluable inspiration and patient guidance, which were instrumental in shaping the main result of this paper.

\section{Definitions and Basic Properties}

\quad Throughout this paper we assume that $ \mathfrak{g}=\mathfrak{g}_{0}\oplus\mathfrak{g}_{1}$ is a finite dimensional lie superalgebra over field $k$, $\mathrm{char}\,k\neq 2$.

\subsection{Self-commuting cone and associated variety}

Some geometric notions of lie superalgebra $\mathfrak{g}$ arise in [DS, \S 11], we introduce their projectivizations here.

\begin{defi}
For a superalgebra $\mathfrak{g}$, the projective self-commuting cone $X$ is defined by
$$X=\{\bar{x}\in\mathcal{P}(\mathfrak{g}_{1})|x\in\mathfrak{g}_{1},[x,x]=0\}
$$
in which $\mathcal{P}(\mathfrak{g}_{1})$ is the projectivization of of $\mathfrak{g}_{1}$.
\end{defi}

For superalgebra $\mathfrak{g}$, its projective self-commuting cone is just $\mathcal{P}(\mathfrak{g}_{1})$. Now let M be a $\mathfrak{g}$-module. For every $x\in \mathfrak{g}_{1}$, the corresponding element $x_{M}\in End_{k}(M)$ satisfies $x_{M}^{2}=0$. Set
$$M_{x}:=\mathrm{Ker}\, x_{M}/xM
$$

It's clear that $M_{x}=M_{cx}$ for $c\in k$, so $M_{x}$ is actually determined by the projective image of x in $\mathcal{P}(\mathfrak{g}_{1})$. Hence we have a well-defined concept:

\begin{defi}
For a $\mathfrak{g}$-module M, we define the projective associated variety of M to be the set
$$X_{M}:=\{\bar{x}\in X|M_{x}\neq 0\}
$$
\end{defi}

One can prove that $X_{M}$ is a Zariski closed subvariety of $X$. Indeed, since $x^{2}_{M}=0$, the Jordan canonical form of $x_{M}$ has a collection of $2\times2$ and $1\times1$ Jordan blocks on the diagonal. Hence $M_{x}=0$ is equivalent to x has only $2\times2$ Jordan blocks on the diagonal of the canonical form, which is the same as the rank of $x_{M}$ equals, or equivalently, greater than $\frac{1}{2}dim\,M$. In other words, 
$$X_{M}=\{\bar{x}\in X|\,\mathrm{rank}\,x_{M}<\frac{1}{2}dim\,M\}
$$
which is Zariski closed in $X$.

In the following contexts, we always identify these varieties with their associated schemes.

Now $\mathfrak{g}_{0}$ has a natural action on sections of structure sheaf $\mathscr{O}_{X}$. Indeed, suppose $e_{1},e_{2},...,e_{n}$ is a basis for $\mathfrak{g}_{0}$, then for $x\in \mathfrak{g}_{0}$ an open set $U\subseteq X$, $f\in\mathscr{O}_{X}(U)$, if $f=e_{i}^{*}$, we define the action by
$$(x.e_{i}^{*})(y)=e_{i}^{*}([x,y]), \quad y\in \mathfrak{g}_{1}
$$
And the action of x on polynomials in $e_{1}^{*},e_{2}^{*},...,e_{n}^{*}$(=regular functions on U) is induced by linearity and Leibniz rule:
$$x.(fg)=x.(f)g+fx.(g)
$$
One can check that this action is compatible with restrictions $\mathscr{O}_{X}(U)\rightarrow\Gamma(V,\mathscr{O}_{X})$ for $V\subseteq U$ and turns rings of sections $\mathscr{O}_{X}(U)$ into $\mathfrak{g}_{0}$-modules.

\subsection{Sheaves with $\mathfrak{g}_{0}$-action}

The action of $\mathfrak{g}_{0}$ on the self-commuting cone inspires us to consider the sheaves of $\mathscr{O}_{X}$-module which have actions of $\mathfrak{g}_{0}$ on all sections.

\begin{defi}
Suppose $X$ is a scheme over k with structure sheaf $\mathscr{O}_{X}$. Lie algebra $\mathfrak{g}_{0}$ has an action on sections of $\mathscr{O}_{X}$ that is compatible with restrictions. A sheaf of $\mathfrak{g}_{0}\mathscr{O}_{X}$-module $\mathscr{M}$ is a sheaf of $\mathscr{O}_{X}$-modules with $\mathfrak{g}_{0}$ action (as a lie algebra) on sections by Leibniz rule: for an open set $U\subseteq X$ and $x\in\mathfrak{g}_{0}, r\in\mathscr{O}_{X}(U),m\in\mathscr{M}(U)$
$$x(rm)=x(r)m+r(xm)
$$
Furthermore, we require x to be an endomorphism of sheaf $\mathscr{M}$, in other words, for open set $V\subseteq U$, we have 
$$x(m|_{V})=(xm)|_{V}
$$
\end{defi}

We can define a category $\mathfrak{g}_{0}\mathfrak{Mod}(X)$ with objects $\mathfrak{g}_{0}\mathscr{O}_{X}$-module. Morphisms between two $\mathfrak{g}_{0}\mathscr{O}_{X}$-modules $\mathscr{M}$ and $\mathscr{N}$ are morphisms of $\mathscr{O}_{X}$-modules which are compatible with $\mathfrak{g}_{0}$ action, in other words, the following diagram commute
\begin{center}
\begin{tikzcd}
\mathscr{M} \arrow[r, "f"] \arrow[d,"x"]
&\mathscr{N} \arrow[d, "x"] \\
\mathscr{M} \arrow[r, "f" ]
& \mathscr{N}
\end{tikzcd}
\end{center}

In the next section we will construct a sheaf $\mathscr{R}_{X}$ and prove that the category $\mathfrak{g}_{0}\mathfrak{Mod}(X)$ is actually the sheaves of $\mathscr{R}_{X}$-module, so that our category $\mathfrak{g}_{0}\mathfrak{Mod}(X)$ will have some good natures. For example, it is an abelian category and it has enough injectives.

For two objects $\mathscr{F},\,\mathscr{G}$ in $\mathfrak{g}_{0}\mathfrak{Mod}(X)$, define their tensor product over $k$ to be the sheafification of the presheaf $U\mapsto \mathscr{F}(U)\otimes_{k}\mathscr{G}(U)$, and the action of $\mathfrak{g}_{0}$ is induced by the action of $\mathfrak{g}_{0}$ on this presheaf: for $m\in\mathscr{F}(U),\,n\in\mathscr{G}(U)$ and $x\in\mathfrak{g}_{0}$, define
$$x(m\otimes n)=xm\otimes n+m\otimes xn
$$

A sheaf of $\mathfrak{g}_{0}\mathscr{O}_{X}$-module is said to be quasi-coherent if it is also a quasi-coherent sheaf as an $\mathscr{O}_{X}$-module. Denote $\mathfrak{g}_{0}\mathfrak{Qco}(X)$\, to be the full subcategory of $\mathfrak{g}_{0}\mathfrak{Mod}(X)$ whose objects are quasi-coherent. As for the notion of coherent sheaves in $\mathfrak{g}_{0}\mathfrak{Mod}$, things are a little more complicated since we need a locally-finite condition for simultaneous action of $\mathscr{O}_{X}$ and $\mathfrak{g}_{0}$, which we will introduce in the next section.

\section{The structure sheaf and cohomology in $\mathfrak{g}_{0}\mathfrak{Mod}(X)$}

\subsection{The structure sheaf $\mathscr{R}_{X}$}

\quad In this section, suppose X is a scheme over k with $\mathfrak{g}_{0}$ action on sections of $\mathscr{O}_{X}$ that is compatible with restrictions.

For a sheaf of $\mathfrak{g}_{0}\mathscr{O}_{X}$-module $\mathscr{M}$ and open set $U\subseteq X$, $\mathscr{M}(U) $ will be both an $\mathscr{O}_{X}(U)$-module and an $\mathfrak{g}_{0}$-module. Denote \,$\mathcal{U}(\mathfrak{g}_{0})$ to be the universal enveloping algebra of $\mathfrak{g}_{0}$, then the $\mathfrak{g}_{0}$-module structure of $\mathscr{M}(U)$ is equivalent to a $\mathcal{U}(\mathfrak{g}_{0})$-module structure. Now consider the ring $R(U)$ defined by
$$R(U):=(\mathscr{O}_{X}(U)\ast\mathcal{U}(\mathfrak{g}_{0}))/I(U)
$$
in which $\ast$ is the free product of two associative k-algebras $\mathscr{O}_{X}(U)$ and $\mathcal{U}(\mathfrak{g}_{0})$ and $I(U)$ is the two-sided ideal generated by elements of the form $x\ast r-x(r)-r\ast x$ for  all $x\in\mathfrak{g}_{0}$, $r\in\mathscr{O}_{X}(U)$. (Here we identify $\mathfrak{g}_{0}$ as a subset of $\mathcal{U}(\mathfrak{g}_{0})$ by PBW theorem). One can see the module of $\mathscr{O}_{X}(U)$ and $\mathcal{U}(\mathfrak{g}_{0})$ structure that satisfies Leibniz rule is equivalent to an $R(U)$-module structure.

Now $R(U)$ is somewhat an enveloping algebra for the two module structure. It's ring structure is somewhat sophisticated, but it looks simpler as a $\mathscr{O}_{X}(U)$-module.

\begin{lem}
R(U) is a free $\mathscr{O}_{X}(U)$-module, in other words, 
$$R(U)\cong\mathscr{O}_{X}(U)\otimes_{k}\mathcal{U}(\mathfrak{g}_{0})
$$
as $\mathscr{O}_{X}(U)$ modules.
\end{lem}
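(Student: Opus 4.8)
The plan is to exhibit an explicit $\mathscr{O}_{X}(U)$-module isomorphism. Write $A:=\mathscr{O}_{X}(U)$ and $\mathcal{U}:=\mathcal{U}(\mathfrak{g}_{0})$, and note that the hypothesis that $\mathfrak{g}_{0}$ acts by the Leibniz rule says precisely that the structure map $\mathfrak{g}_{0}\to\mathrm{End}_{k}(A)$ lands in the derivations $\mathrm{Der}_{k}(A)$. Let $\pi\colon A\ast\mathcal{U}\to R(U)$ be the quotient map, and define an $A$-linear map
$$\phi\colon A\otimes_{k}\mathcal{U}\longrightarrow R(U),\qquad \phi(r\otimes u)=\pi(r\ast u),$$
which is well defined since the right-hand side is $k$-bilinear and $A$-linear in $r$. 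First I would show $\phi$ is surjective. The relation generating $I(U)$ reads, in $R(U)$, as $x\ast r=x(r)+r\ast x$ for $x\in\mathfrak{g}_{0}$, $r\in A$; iterating it lets one move every factor from $A$ to the left past any factor from $\mathfrak{g}_{0}$, at the cost of lower-order derivation terms. Hence every product of generators in $R(U)$ is an $A$-linear combination of images of monomials in $\mathfrak{g}_{0}$, and the PBW theorem inside $\mathcal{U}$ reduces these to ordered PBW monomials, all of which lie in the image of $\phi$.

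The essential point is injectivity, which I would obtain by constructing a suitable representation. Put $V:=A\otimes_{k}\mathcal{U}$ and make it a module over both factors as follows: let $A$ act by left multiplication on the first tensor slot, $r\cdot(s\otimes u)=(rs)\otimes u$, and let each $x\in\mathfrak{g}_{0}$ act by
$$x\cdot(s\otimes u)=x(s)\otimes u+s\otimes xu.$$
A direct computation shows that for $x,y\in\mathfrak{g}_{0}$ one has $(xy-yx)\cdot(s\otimes u)=[x,y](s)\otimes u+s\otimes[x,y]u$, so the assignment respects the bracket and therefore extends to a $\mathcal{U}$-module structure on $V$. A second computation shows that the $A$-action and the $\mathfrak{g}_{0}$-action together satisfy the Leibniz relation $x\cdot(r\cdot v)=x(r)\cdot v+r\cdot(x\cdot v)$; this is exactly the relation cutting out $I(U)$, so $V$ becomes an $R(U)$-module.

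With $V$ in hand, consider the evaluation map $\mathrm{ev}\colon R(U)\to V$, $a\mapsto a\cdot(1\otimes1)$. Since $x(1)=0$ for every derivation, induction on the length of a PBW monomial gives $u\cdot(1\otimes1)=1\otimes u$ for all $u\in\mathcal{U}$, whence $\pi(r\ast u)\cdot(1\otimes1)=r\otimes u$. Thus the composite $\mathrm{ev}\circ\phi\colon A\otimes_{k}\mathcal{U}\to V$ is the identity, forcing $\phi$ to be injective. Combined with the surjectivity above, $\phi$ is the desired $A$-linear isomorphism.

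I expect the main obstacle to be verifying that $V$ is genuinely an $R(U)$-module, namely the two compatibility computations of the previous paragraph: these are where the derivation property (Leibniz rule) of the $\mathfrak{g}_{0}$-action on $A$ and the PBW theorem for $\mathcal{U}(\mathfrak{g}_{0})$ are used in an essential way. Everything else — well-definedness, surjectivity, and the evaluation argument — is then formal. This identifies $R(U)$ as the smash product $A\#\mathcal{U}(\mathfrak{g}_{0})$, and the argument is the standard PBW-type freeness statement for such products.
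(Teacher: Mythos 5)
Your proof is correct, but it takes a genuinely different route from the paper's. The paper constructs an auxiliary Lie algebra $\mathfrak{l}=\mathfrak{g}_{0}\oplus\mathscr{O}_{X}(U)$ (the semidirect product of $\mathfrak{g}_{0}$ with $\mathscr{O}_{X}(U)$ regarded as an abelian Lie algebra), invokes the classical PBW theorem to obtain $\mathcal{U}(\mathfrak{l})\cong\mathcal{S}(\mathscr{O}_{X}(U))\otimes_{k}\mathcal{U}(\mathfrak{g}_{0})$ as $\mathcal{S}(\mathscr{O}_{X}(U))$-modules, and then quotients by the ideal $J(U)$ identifying the symmetric algebra $\mathcal{S}(\mathscr{O}_{X}(U))$ with the ring $\mathscr{O}_{X}(U)$, so that the freeness of $R(U)$ is inherited from PBW for $\mathfrak{l}$. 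You instead prove freeness directly by the standard smash-product argument: surjectivity of $r\otimes u\mapsto r\ast u$ by straightening words with the defining relation, and injectivity by exhibiting the $R(U)$-module $V=\mathscr{O}_{X}(U)\otimes_{k}\mathcal{U}(\mathfrak{g}_{0})$ and evaluating at $1\otimes 1$; your compatibility checks for $V$ are exactly right and rest on the facts that $\mathfrak{g}_{0}\to\mathrm{Der}_{k}(\mathscr{O}_{X}(U))$ is a Lie algebra homomorphism and that Lie algebra actions extend to $\mathcal{U}(\mathfrak{g}_{0})$-actions. Your route is more self-contained and avoids the paper's detour through $\mathcal{S}(\mathscr{O}_{X}(U))$ and the bookkeeping with the ideals $I'(U)$ and $J'(U)$; the paper's route buys the result quickly by treating PBW as a black box. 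One small quibble: you attribute the essential input in the module verification to the PBW theorem for $\mathcal{U}(\mathfrak{g}_{0})$, but what you actually use there is the universal property of the enveloping algebra (extension of a Lie algebra representation to an algebra representation) together with the derivation property; PBW itself is not needed anywhere in your argument, which if anything is a point in its favor.
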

\begin{proof}
We construct a lie algebra $\mathfrak{l}$ , the underlying vector space is $\mathfrak{g}_{0}\oplus\mathscr{O}_{X}(U)$. We define the lie bracket by
\begin{equation}\notag
  [x,y]_{\mathfrak{l}}=\begin{cases}
    [x,y]_{\mathfrak{g}_{0}} & \text{if $x,y\in\mathfrak{g}_{0}$}\\
    x(y) & \text{if 
$x\in\mathfrak{g}_{0}, y\in\mathscr{O}_{X}(U)$}\\
-y(x) & \text{if 
$x\in\mathscr{O}_{X}(U),y\in\mathfrak{g}_{0}$}\\
0  & \text{if $x,y\in\mathscr{O}_{X}(U)$}
  \end{cases}
\end{equation}
It's straightforward to check that this bracket satisfies the lie algebra axioms. Denote $\mathcal{U}(\mathfrak{l})$ to be the universal enveloping algebra of $\mathfrak{l}$. The universal enveloping algebra of the trivial lie subalgebra $\mathscr{O}_{X}(U)$ in $\mathfrak{l}$ is the symmetric algebra $\mathcal{S}(\mathscr{O}_{X}(U))$ (corresponding to the basis of monomials with lexicographic order). By PBW Theorem,
$$\mathcal{U}(\mathfrak{l})\cong\mathcal{S}(\mathscr{O}_{X}(U))\otimes_{k}\,\mathcal{U}(\mathfrak{g}_{0})
$$
as $\mathcal{S}(\mathscr{O}_{X}(U))$-modules. Now denote $J(U)$ to be the ideal in $\mathcal{S}(\mathscr{O}_{X}(U))$ generated by $f\otimes g-fg,\, f,g\in\mathscr{O}_{X}(U)$. Then $\mathcal{S}(\mathscr{O}_{X}(U))/J(U)\cong\mathscr{O}_{X}(U)$ as rings. Then there is an isomorphism of $\mathscr{O}_{X}(U)$(= $\mathcal{S}(\mathscr{O}_{X}(U))/J(U)$)-modules
$$
\mathcal{U}(\mathfrak{l})/J(U)\mathcal{U}(\mathfrak{l})\cong\mathcal{S}(\mathscr{O}_{X}(U))/J(U)\otimes_{k}\mathcal{U}(\mathfrak{g}_{0})=\mathscr{O}_{X}(U)\otimes_{k}\mathcal{U}(\mathfrak{g}_{0})
$$
On the other hand, by definition,
$$\mathcal{U}(\mathfrak{l})=(\mathcal{S}(\mathscr{O}_{X}(U))\ast\mathcal{U}(\mathfrak{g}_{0}))/I'(U)
$$
where $I'(U)$ is the two sided ideal in $\mathcal{S}(\mathscr{O}_{X}(U))\ast\mathcal{U}(\mathfrak{g}_{0})$ generated by all elements of the form $x\ast r-x(r)-r\ast x$, $x\in\mathfrak{g}_{0}$, $r\in\mathscr{O}_{X}(U)$. Now let $J'(U)$ be the two-sided ideal in $\mathcal{S}(\mathscr{O}_{X}(U))\ast\mathcal{U}(\mathfrak{g}_{0})$ generated by generated by $f\otimes g-fg$. One can see that:
\begin{align}
\notag
\mathcal{U}(\mathfrak{l})/J(U)\mathcal{U}(\mathfrak{l})
&\cong(\mathcal{S}(\mathscr{O}_{X}(U))\ast\mathcal{U}(\mathfrak{g}_{0}))/(I'(U)+J'(U))\\\notag
&\cong(\mathscr{O}_{X}(U)\ast\mathcal{U}(\mathfrak{g}_{0}))/((I'(U)+J'(U))/J'(U))\\\notag
&\cong((\mathscr{O}_{X}(U)\ast\mathcal{U}(\mathfrak{g}_{0}))/I(U)\\\notag
&=R(U)
\end{align}
as $\mathscr{O}_{X}(U)$-modules. Hence we have an $\mathscr{O}_{X}(U)$-module isomorphism
$$R(U)\cong\mathcal{U}(\mathfrak{l})/J(U)\mathcal{U}(\mathfrak{l})\cong\mathscr{O}_{X}(U)\otimes_{k}\mathcal{U}(\mathfrak{g}_{0})
$$
\end{proof}

Notice that the restriction $\mathscr{O}_{X}(U)\rightarrow\mathscr{O}_{X}(V)$ naturally induce a ring homomorphism $R(U)\rightarrow R(V)$. This allows us to construct a presheaf
$$U\mapsto R(U)
$$
By Lemma 1, this presheaf is actually a sheaf that is isomorphic to $\mathcal{U}(\mathfrak{g}_{0})\otimes_{k}\mathscr{O}_{X}$ as a sheaf of $\mathscr{O}_{X}$-modules. We name this sheaf $\mathscr{R}_{X}$. Indeed, $\mathscr{R}_{X}$ plays a similar role in $\mathfrak{g}_{0}\mathfrak{Mod}(X)$ as the structure sheaf $\mathscr{O}_{X}$ in $\mathfrak{Mod}(X)$: Firstly, the $\mathfrak{g}_{0}$ action on sections of $\mathscr{M}$ that satisfies Leibniz rule corresponds to the $R(U)$-module structure on $\mathscr{M}(U)$, as we discussed in the beginning of this section. Secondly, the compatibility of  $\mathfrak{g}_{0}$ action with restrictions is the same as the compatibility of module structure via the ring homomorphism $R(U)\rightarrow R(V)$. Finally, the morphisms in $\mathfrak{g}_{0}\mathfrak{Mod}(X)$ are actually morphisms of $\mathscr{R}_{X}$-modules. Therefore, $\mathfrak{g}_{0}\mathfrak{Mod}(X)$ is a category of $\mathscr{R}_{X}$-modules, and we call $\mathscr{R}_{X}$ by $\mathfrak{g}_{0}$-structure sheaf or simply structure sheaf if there is no confusion.

\begin{rem}
The $\mathfrak{g}_{0}$ action on the system $\mathscr{O}_{U}$ induces an action on stalks $\mathscr{O}_{x,X}$, which is the same as the $\mathscr{R}_{x,X}$($\cong(\mathscr{O}_{x,X}\ast\mathcal{U}(\mathfrak{g}_{0}))/I_{x}$)-module structure. 
\end{rem}
\begin{rem}
One can easily check that a sheaf $\mathscr{R}_{X}$ in $\mathfrak{g}_{0}\mathfrak{Mod}(X)$ is quasi-coherent if and only if there is a open covering $X=\cup_{i}U_{i}=\cup_{i}SpecA_{i}$ such that for all $U_{i}$ (or equivalently, for all open affine subsets, as [HR, Ch. II, 5.4] shows), the restriction of $\mathscr{M}$, $\mathscr{M}|_{U_{i}}$, is of the form $\tilde{M_{i}}$ for some $\mathscr{R}(U_{i})$ ($=(A_{i}\ast\mathcal{U}_{\mathfrak{g}_{0}})/I(U_{i})$)-module M. Equivalently, if for any point $x\in X$, there is a open neighberhood $U$ of X and an exact sequence
$$\mathscr{R}_{U}^{\oplus I}\rightarrow\mathscr{R}_{U}^{\oplus J}\rightarrow\mathscr{M}\rightarrow 0
$$
for some sets $I$ and $J$. However, sometimes it's more convenient to use the definition that a $\mathfrak{g}_{0}$ quasi-coherent sheaf is a quasi-coherent $\mathscr{O}_{X}$ module with $\mathfrak{g}_{0}$ action. Especially, when we cite results in [HR, Ch.II, \S 5], we always firstly think of the case of $\mathscr{O}_{X}$-module and then add $\mathfrak{g}_{0}$-action.
\end{rem}

Now with the notion of structure sheaf $\mathscr{R}_{X}$ and the remark above, we can give a definition of coherent sheaves in $\mathfrak{g}_{0}\mathfrak{Mod}(X)$.
\begin{defi}
A quasi-coherent sheaf of $\mathscr{R}_{X}$-modules $\mathscr{M}$ is said to be coherent if for a open covering $X=\cup_{i}U_{i}=\cup_{i}SpecA_{i}$ with $\mathscr{M}|_{U_{i}}=\tilde{M_{i}}$, $M_{i}$ is finitely generated as an $\mathscr{R}_{X}(U_{i})$-module. Equivalently, if for any point $x\in X$, there is a open neighberhood $U$ of X and an exact sequence
$$\mathscr{R}_{U}^{\oplus I}\rightarrow\mathscr{R}_{U}^{\oplus J}\rightarrow\mathscr{M}\rightarrow 0
$$
for set $I$ and finite set $J$
\end{defi}

Similarly, we can define the locally free objects in $\mathfrak{g}_{0}\mathfrak{Mod}(X)$ to be the sheaf of $\mathscr{R}_{X}$ module which admits an open covering $\{U_{i}\}$ of X  such that in each $U_{i}$, it is isomorphic to a direct sum of copies of $\mathscr{R}_{X}|_{U_{i}}$.

\subsection{Derived functors in $\mathfrak{g}_{0}\mathfrak{Mod}(X)$ and $\mathfrak{g}_{0}\mathfrak{Qco}(X)$}

Next, we exploit some basic nature of cohomology in category $\mathfrak{g}_{0}\mathfrak{Mod}(X)$. Firstly, as in the category $\mathfrak{Mod}(X)$, $\mathfrak{g}_{0}\mathfrak{Mod}(X)$ has enough injectives.

\begin{prop}
Any object in $\mathfrak{g}_{0}\mathfrak{Mod}(X)$ admits a injection into an injective objects, in other words, the category $\mathfrak{g}_{0}\mathfrak{Mod}(X)$ has enough injectives.
\end{prop}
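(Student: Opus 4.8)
The plan is to exploit the identification, established in the previous subsection, of $\mathfrak{g}_{0}\mathfrak{Mod}(X)$ with the category of sheaves of $\mathscr{R}_{X}$-modules, and then to transplant the classical Grothendieck argument for $\mathscr{O}_{X}$-modules (as in [HR, Ch.III, 2.2]) to this setting, carrying the $\mathfrak{g}_{0}$-action along at every step. Concretely, fix a point $x\in X$, let $j\colon\{x\}\hookrightarrow X$ be the inclusion, and let $\mathscr{R}_{x,X}$ be the stalk ring of Remark 3.1. Given $\mathscr{M}\in\mathfrak{g}_{0}\mathfrak{Mod}(X)$, the stalk $\mathscr{M}_{x}$ is an $\mathscr{R}_{x,X}$-module; since the category of left modules over an arbitrary (possibly noncommutative) ring has enough injectives, we may embed $\mathscr{M}_{x}\hookrightarrow I_{x}$ into an injective $\mathscr{R}_{x,X}$-module $I_{x}$. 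The candidate injective object is then the product of skyscrapers
$$\mathscr{I}=\prod_{x\in X}j_{*}(I_{x}),$$
and the desired monomorphism will be assembled from the adjoint maps $\mathscr{M}\to j_{*}(\mathscr{M}_{x})\to j_{*}(I_{x})$.

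First I would check that $j_{*}(I_{x})$ genuinely lives in $\mathfrak{g}_{0}\mathfrak{Mod}(X)$. As an $\mathscr{O}_{X}$-module it is the usual skyscraper, with sections $I_{x}$ over every open $U\ni x$ and $0$ otherwise; the $\mathfrak{g}_{0}$-action on each such section is the one coming from the $\mathcal{U}(\mathfrak{g}_{0})$-part of the $\mathscr{R}_{x,X}$-module structure on $I_{x}$. The Leibniz rule relating this action to multiplication by $\mathscr{O}_{X}(U)$ holds precisely because the defining relation $x\ast r-x(r)-r\ast x$ of $\mathscr{R}_{x,X}$ is the Leibniz relation and the structure map $\mathscr{O}_{X}(U)\to\mathscr{O}_{x,X}$ is $\mathfrak{g}_{0}$-equivariant; compatibility with restriction is immediate since all nonzero restriction maps of a skyscraper are the identity. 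Next I would establish the adjunction
$$\mathrm{Hom}_{\mathscr{R}_{X}}(\mathscr{F},j_{*}(N))\cong\mathrm{Hom}_{\mathscr{R}_{x,X}}(\mathscr{F}_{x},N),$$
the $\mathfrak{g}_{0}$-equivariant refinement of the standard stalk/skyscraper adjunction for $\mathscr{O}_{X}$-modules. Because the stalk functor $\mathscr{F}\mapsto\mathscr{F}_{x}$ is exact, its right adjoint $j_{*}$ sends injective $\mathscr{R}_{x,X}$-modules to injective objects of $\mathfrak{g}_{0}\mathfrak{Mod}(X)$; since a product of injectives is injective, $\mathscr{I}$ is injective.

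It remains to see that the natural map $\mathscr{M}\to\mathscr{I}$ is a monomorphism, which may be tested on stalks: for each $y\in X$ the composite $\mathscr{M}_{y}\to\mathscr{I}_{y}\to(j_{*}(I_{y}))_{y}=I_{y}$ recovers the chosen injection $\mathscr{M}_{y}\hookrightarrow I_{y}$, whence $\mathscr{M}_{y}\to\mathscr{I}_{y}$ is itself injective. I expect the only genuine work, as opposed to bookkeeping, to be the verification that the $\mathfrak{g}_{0}$-structure survives throughout — that $j_{*}(I_{x})$ is a legitimate $\mathscr{R}_{X}$-module and that the adjunction and the unit map $\mathscr{M}\to j_{*}(\mathscr{M}_{x})$ are morphisms in $\mathfrak{g}_{0}\mathfrak{Mod}(X)$ and not merely of underlying $\mathscr{O}_{X}$-modules; once $\mathscr{R}_{x,X}$ is understood to encode exactly the Leibniz relation, these reduce to the classical checks. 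Alternatively, one could observe that $\mathfrak{g}_{0}\mathfrak{Mod}(X)$ is a Grothendieck abelian category — it is cocomplete, satisfies AB5, and admits a generating family given by the extensions by zero of the restrictions $\mathscr{R}_{X}|_{U}$ over open $U\subseteq X$ — whence it has enough injectives by the general theorem; but the explicit skyscraper construction is the more useful one for the cohomology computations of the following subsection.
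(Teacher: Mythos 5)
Your proposal is correct and is essentially the paper's own argument: the paper simply observes that $(X,\mathscr{R}_{X})$ is a ringed space and cites [HR, Ch.III, 2.2], and your skyscraper-product construction is precisely the proof of that cited result, carried out with the $\mathfrak{g}_{0}$-equivariant checks made explicit. The only added value in your write-up is the verification that the skyscraper sheaves and the stalk adjunction remain within $\mathfrak{g}_{0}\mathfrak{Mod}(X)$, which the paper leaves implicit in the phrase ``ringed space.''
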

\begin{proof}
$(X,\mathscr{R}_{X})$ is a ringed space, so this result follows from [HR, Ch.III, 2.2]
\end{proof}

Now with this proposition, we can define the $\mathfrak{g}_{0}$-sheaf cohomology $H_{\mathfrak{g}_{0}}^{i}(X,\cdot)$ to be the derived functor of global section functor $\Gamma(X,\cdot)$ from  $\mathfrak{g}_{0}\mathfrak{Mod}(X)$ to $\mathfrak{Ab}$.  Recall that a sheaf $\mathscr{F}$ is \emph{flasque} if the restriction maps $\mathscr{F}(U)\rightarrow\mathscr{F}(V)$ are surjective. Again since $(X,\mathscr{R}_{X})$ is a ringed space, by [HR, Ch.III, 2.3; 2.4; 2.6], We have the following result.
\begin{prop}
Injective $\mathscr{R}_{X}$-modules are flasque and therefore  acyclic for the functor $\Gamma(X,\cdot)$. Hence, the derived functors of the global section functor from $\mathfrak{g}_{0}\mathfrak{Mod}(X)$ to $\mathfrak{Ab}$ coincide with the sheaf cohomology functor $H^{i}(X,\cdot)$ from $\mathfrak{Ab}(x)$ to $\mathfrak{Ab}$.
\end{prop}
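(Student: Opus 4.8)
The plan is to unpack the three cited facts from [HR, Ch.~III, \S 2] and to check that each one survives the passage from the commutative structure sheaf $\mathscr{O}_{X}$ to the (generally non-commutative) structure sheaf $\mathscr{R}_{X}$; once this is done the statement becomes formal. Recall from the previous section that $\mathfrak{g}_{0}\mathfrak{Mod}(X)$ is precisely the category of sheaves of left $\mathscr{R}_{X}$-modules on the ringed space $(X,\mathscr{R}_{X})$, and that the global section functor $\Gamma(X,\cdot)$ on this category is literally the composite of the forgetful functor $\mathfrak{g}_{0}\mathfrak{Mod}(X)\to\mathfrak{Ab}(X)$ with the ordinary $\Gamma(X,\cdot)$ on abelian sheaves. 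This factorization is the observation that will drive the comparison at the end.

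First I would prove that every injective object $\mathscr{I}$ of $\mathfrak{g}_{0}\mathfrak{Mod}(X)$ is flasque. For an open set $U\subseteq X$ with inclusion $j\colon U\hookrightarrow X$, form the extension by zero $\mathscr{R}_{U}:=j_{!}(\mathscr{R}_{X}|_{U})$, a sheaf of left $\mathscr{R}_{X}$-modules, and use the natural adjunction isomorphism
$$\operatorname{Hom}_{\mathscr{R}_{X}}(\mathscr{R}_{U},\mathscr{F})\cong\Gamma(U,\mathscr{F})$$
valid for any $\mathscr{R}_{X}$-module $\mathscr{F}$. For $V\subseteq U$ the extension-by-zero of the identity gives a monomorphism $\mathscr{R}_{V}\hookrightarrow\mathscr{R}_{U}$, so applying the exact functor $\operatorname{Hom}_{\mathscr{R}_{X}}(\cdot,\mathscr{I})$ and invoking the isomorphism above shows that the restriction $\Gamma(U,\mathscr{I})\to\Gamma(V,\mathscr{I})$ is surjective. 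Hence $\mathscr{I}$ is flasque.

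Next, a flasque $\mathscr{R}_{X}$-module is in particular a flasque sheaf of abelian groups, and flasque abelian sheaves are acyclic for $\Gamma(X,\cdot)$; therefore injective $\mathscr{R}_{X}$-modules are $\Gamma$-acyclic, which is the first assertion. For the comparison of derived functors, take $\mathscr{M}\in\mathfrak{g}_{0}\mathfrak{Mod}(X)$ and an injective resolution $\mathscr{M}\to\mathscr{I}^{\bullet}$ in $\mathfrak{g}_{0}\mathfrak{Mod}(X)$. Since the forgetful functor to $\mathfrak{Ab}(X)$ is exact, $\mathscr{I}^{\bullet}$ is a resolution of $\mathscr{M}$ in $\mathfrak{Ab}(X)$ by flasque, hence $\Gamma$-acyclic, sheaves. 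By the standard fact that a derived functor may be computed from any acyclic resolution, the single complex $\Gamma(X,\mathscr{I}^{\bullet})$ simultaneously computes the derived functors $R^{i}\Gamma(X,\cdot)$ in $\mathfrak{g}_{0}\mathfrak{Mod}(X)$ and the sheaf cohomology $H^{i}(X,\mathscr{M})$, so the two coincide.

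The only point that needs genuine care — and hence the main obstacle — is that $\mathscr{R}_{X}$ is not commutative, so one cannot quote [HR, Ch.~III, \S 2] verbatim, since that reference is phrased for the commutative $\mathscr{O}_{X}$. The resolution is that none of the three ingredients actually uses commutativity: the functor $j_{!}$ and the adjunction isomorphism are purely sheaf-theoretic and hold for left modules over an arbitrary sheaf of rings; flasqueness and $\Gamma$-acyclicity are properties of the underlying abelian sheaves; and the acyclic-resolution comparison is formal homological algebra. Thus the arguments of loc.\ cit.\ transfer directly to $(X,\mathscr{R}_{X})$, which yields the proposition.
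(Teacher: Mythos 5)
Your proposal is correct and is essentially the paper's argument in expanded form: the paper simply notes that $(X,\mathscr{R}_{X})$ is a ringed space and cites [HR, Ch.~III, 2.3, 2.4, 2.6], and your proof reconstructs exactly those cited arguments (extension by zero $j_{!}(\mathscr{R}_{X}|_{U})$ and the adjunction to show injectives are flasque, then flasque $\Rightarrow$ $\Gamma$-acyclic, then comparison via acyclic resolutions), verifying along the way that nothing requires commutativity of the structure sheaf. No gaps.
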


Now we focus our attention on the cohomology in $\mathfrak{g}_{0}\mathfrak{Qco}(X)$. Firstly, this category also have enough injectives when $X$ is noetherian, as the following proposition shows.
\begin{prop}
Suppose $X$ is noetherian. Let $\mathscr{M}$ be a quasi-coherent sheaf of $\mathscr{R}_{X}$-modules, then $\mathscr{M}$ can be embedded into a quasi-coherent sheaf that is injective as an $\mathscr{R}_{X}$-module. Hence the category $\mathfrak{g}_{0}\mathfrak{Qco}(X)$ has enough injectives.
\end{prop}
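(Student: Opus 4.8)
The plan is to transcribe the proof of the classical statement [HR, Ch.III, 3.6] into the setting of $\mathscr{R}_X$-modules, using Remark 3.2 and Lemma 3.1 to push all quasi-coherence bookkeeping down to the underlying $\mathscr{O}_X$-level. I regard $\mathfrak{g}_0\mathfrak{Mod}(X)$ as the category of $\mathscr{R}_X$-modules throughout. The one structural input I would record first is that the $\mathfrak{g}_0$-action on $\mathscr{O}_X$ extends uniquely to every localization (it acts by derivations, which extend to localizations): thus for each $x\in X$ the local spectrum $Y_x:=\mathrm{Spec}\,\mathscr{O}_{X,x}$ again carries a $\mathfrak{g}_0$-geometry, with structure sheaf $\mathscr{R}_{Y_x}$ and stalk ring $\mathscr{R}_{x,X}$ (Remark 3.1), and the canonical flat morphism $j_x\colon Y_x\to X$ is $\mathfrak{g}_0$-equivariant. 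Consequently $(j_x)_*$ sends $\mathscr{R}_{Y_x}$-modules to $\mathscr{R}_X$-modules, and $j_x^{*}$ is exact.

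Given a quasi-coherent $\mathscr{M}$, for each $x$ the stalk $\mathscr{M}_x$ is an $\mathscr{R}_{x,X}$-module, and since modules over any ring have enough injectives I would choose an embedding $\mathscr{M}_x\hookrightarrow I_x$ into an injective $\mathscr{R}_{x,X}$-module. I then set $\mathscr{I}:=\prod_{x\in X}(j_x)_*\widetilde{I_x}$, where $\widetilde{I_x}$ is the quasi-coherent $\mathscr{R}_{Y_x}$-module attached to $I_x$ on the affine $Y_x$. The adjoint of $\mathscr{M}_x\to I_x$ furnishes a map $\mathscr{M}\to(j_x)_*\widetilde{I_x}$ for each $x$, hence a map $\mathscr{M}\to\mathscr{I}$. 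This is injective because the stalk of $(j_y)_*\widetilde{I_y}$ at $y$ is exactly $I_y$ (the only open of $Y_y$ containing its closed point is $Y_y$ itself), so the $y$-th component already recovers the chosen injection $\mathscr{M}_y\hookrightarrow I_y$ on the stalk at $y$, for every $y$.

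It then remains to see that $\mathscr{I}$ is injective as an $\mathscr{R}_X$-module and quasi-coherent, each being the $\mathscr{R}$-analogue of a step in [HR, Ch.III]. For injectivity, a product of injectives is injective, and each factor $(j_x)_*\widetilde{I_x}$ is injective because $j_x^{*}$ is exact and its right adjoint $(j_x)_*$ therefore preserves injectives, once one knows $\widetilde{I_x}$ is injective in $\mathscr{R}_{Y_x}\text{-Mod}$. That last point is the affine case, the analogue of [HR, Ch.III, 3.4]: on a noetherian affine the sheaf associated to an injective $\mathscr{R}(Y_x)$-module is injective as an $\mathscr{R}_{Y_x}$-module. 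I would establish it by transcribing Hartshorne's argument (reduce to coherent test objects via the noetherian hypothesis, then compute $\mathrm{Hom}$ stalkwise) with $\mathscr{R}$ in place of $\mathscr{O}$. For quasi-coherence, by Remark 3.2 it suffices to verify it for the underlying $\mathscr{O}_X$-module; there $\mathscr{I}$ is precisely a product of pushforwards from local spectra of the kind that is shown quasi-coherent in the proof of [HR, Ch.III, 3.6].

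The main obstacle is this last quasi-coherence: sheaf-theoretic products of quasi-coherent sheaves need not be quasi-coherent, and it is exactly the noetherian hypothesis that makes localization interact correctly with the product in the special configuration of pushforwards $(j_x)_*\widetilde{I_x}$, just as in Hartshorne. The only genuinely new verification is the affine injectivity statement, since injectivity is now measured over the ring $\mathscr{R}(Y_x)\cong\mathscr{O}_{X,x}\otimes_k\mathcal{U}(\mathfrak{g}_0)$ rather than over $\mathscr{O}_{X,x}$; but the freeness supplied by Lemma 3.1 keeps the underlying $\mathscr{O}_X$-module computations, and hence all localizations, identical to the classical ones, so Hartshorne's proof carries over essentially verbatim.
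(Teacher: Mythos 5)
Your route is genuinely different from the paper's, and more ambitious: you follow the pointwise construction of [HR, Ch.III, 2.2/3.6], pushing injective hulls of stalks forward from the local spectra $\mathrm{Spec}\,\mathscr{O}_{X,x}$, and you aim at injectivity in the full category $\mathfrak{g}_{0}\mathfrak{Mod}(X)$. The paper instead takes a \emph{finite} affine open cover $X=\bigcup_i U_i$ (noetherian hypothesis), uses the affine equivalence $\mathfrak{g}_{0}\mathfrak{Qco}(U_i)\cong\mathfrak{Mod}(\mathscr{R}_X(U_i))$ so that $\tilde{I_i}$ is injective in $\mathfrak{g}_{0}\mathfrak{Qco}(U_i)$ \emph{by definition} (no sheaf-level injectivity lemma needed), pushes forward along the open immersions $f_i$ (whose pullbacks are exact, so $f_{i*}$ preserves injectives), and takes the product of the finitely many $f_{i*}(\tilde{I_i})$. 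It thereby proves enough injectives in $\mathfrak{g}_{0}\mathfrak{Qco}(X)$, which is all that is used later, at the cost of not literally establishing injectivity as an $\mathscr{R}_X$-module.

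Two steps of your version are genuine gaps. First, quasi-coherence of $\mathscr{I}=\prod_{x\in X}(j_x)_*\widetilde{I_x}$: this product is indexed by \emph{all points} of $X$, hence is infinite, and an infinite product of quasi-coherent sheaves is not quasi-coherent even on a noetherian scheme (on $\mathrm{Spec}\,k[t]$ the sheaf $\prod_{n\geq 1}\widetilde{k[t]/(t^n)}$ has zero sections on $D(t)$ while $\bigl(\prod_n k[t]/(t^n)\bigr)_t\neq 0$). Your appeal to "just as in Hartshorne" does not exist: [HR, Ch.III, 3.6] avoids exactly this by using a finite affine cover and injecting $\mathscr{M}|_{U_i}\hookrightarrow\tilde{I_i}$ on each piece, so that a finite product (or direct sum) suffices. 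Second, the affine lemma that $\widetilde{I_x}$ is injective in the full category of $\mathscr{R}_{Y_x}$-modules is the crux of your argument and is asserted rather than proved; it is not "essentially verbatim" from [HR, Ch.III, 3.4], whose proof rests on commutative noetherian machinery (Krull's intersection theorem and the structure of $\Gamma_{\mathfrak{a}}(I)$ for injective $I$), whereas $\mathscr{R}(Y_x)$ is a noncommutative smash product $\mathscr{O}_{X,x}\#\,\mathcal{U}(\mathfrak{g}_0)$. The freeness from Lemma 3.1 only tells you that an injective $\mathscr{R}$-module restricts to an injective $\mathscr{O}$-module; injectivity over $\mathscr{R}_{Y_x}$ must be tested against arbitrary inclusions of $\mathscr{R}_{Y_x}$-modules, not just those induced from $\mathscr{O}_{Y_x}$-modules, so the reduction you describe does not close the argument. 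If you want the statement as literally phrased, this lemma needs an actual proof; if you only need enough injectives in $\mathfrak{g}_{0}\mathfrak{Qco}(X)$, the paper's cover-and-adjunction argument sidesteps both difficulties.
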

\begin{proof}
Suppose $\{U_{i}\}_{i\in I}$ is a family of affine open sets that covers X. Now as in [HR, Ch.II, \S 5],  
since $U_{i}$'s are affine, there are equivalences of categories: $\mathfrak{g}_{0}\mathfrak{Qco}(U_{i})\cong\mathfrak{Mod}(\mathscr{R}_{X}(U_{i}))$. For each $i\in I$, suppose $U_{i}=\mathrm{Spec}A_{i}$ and $\mathscr{M}(U_{i})=\tilde{M_{i}}$, in which $M_{i}$ is an $\mathscr{R}_{X}(U_{i})$-module. Now there is an injection $M_{i}\rightarrow I_{i}$, in which $I_{i}$ is injective as an $\mathscr{R}_{X}(U_{i})$-module, so correspondingly, $\tilde{M_{i}}$ can be embedded into $\tilde{I_{i}}$ that is an injective object in $\mathfrak{g}_{0}\mathfrak{Qco}(U_{i})$.

Now suppose $f_{i}$ is the open immersion of schemes $\mathrm{Spec}A_{i}\rightarrow\mathscr{M}$. Since $X$ is noetherian, so is each $A_{i}$. Then by [HR, Ch.II, \S 5], both the inverse image functor ${f_{i}}^{\ast}$ and the direct image functor ${f_{i}}_{\ast}$ send quasi-coherent sheaves to quasi-coherent sheaves. Furthermore, there is an adjunction of functors:
$$ \mathrm{Hom}_{\mathscr{R}_{X}|_{U_{i}}}({f_{i}}^{\ast}\mathscr{G}, \mathscr{F})\cong\mathrm{Hom}_{\mathscr{R}_{X}}(\mathscr{G}, {f_{i}}_{\ast}\mathscr{F})
$$
On the other hand, ${f_{i}}^{\ast}$ is exact since $f_{i}$ is flat. Therefore, ${f_{i}}_{\ast}$ is right adjoint to an exact functor, by [WC, Ch.II, \S 3], ${f_{i}}^{\ast}$ preserves injectives. Hence ${f_{i}}_{\ast}(\tilde{I_{i}})$ is injective in $\mathfrak{g}_{0}\mathfrak{Qco}(X)$. Again by that adjunction there is a map $\mathscr{M}\rightarrow{f_{i}}_{\ast}(\tilde{I_{i}})$ that equals $M_{i}\rightarrow I_{i}$ when restricted to $U_{i}$, hence is injective at stalks in $U_{i}$. Now consider the product of maps $\mathscr{M}\rightarrow\prod_{i}{f_{i}}_{\ast}(\tilde{I_{i}})$, it is injective at stalks everywhere and therefore injective as map of sheaves. Furthermore, $\prod_{i}{f_{i}}_{\ast}(\tilde{I_{i}})$ is a product of injectives, hence is also injective (it is quasi-coherent obviously). Then $\mathscr{M}$ admits an injection into a injective object in $\mathfrak{g}_{0}\mathfrak{Qco}(X)$, completing the proof.

\end{proof}

This proposition allows us to define right derived functors in $\mathfrak{g}_{0}\mathfrak{Qco}(X)$. We define sheaf cohomology $H^{i}_{\mathfrak{g}_{0}\mathfrak{Qco}(X)}(X,\cdot)$ as the derived functor for global section functor $\Gamma(X,\cdot)$ and $\mathrm{Ext}^{i}_{\mathfrak{g}_{0}\mathfrak{Qco}(X)}(\mathscr{F},\cdot)$ as the derived functor for $\mathrm{Hom}_{\mathfrak{g}_{0}\mathfrak{Qco}(X)}(\mathscr{F},\cdot)$. We have two basic properties of them.
\begin{prop}
For any object $\mathscr{F}$ in $\mathrm{Ob}\,\mathfrak{g}_{0}\mathfrak{Qco}(X)$, we have

$\mathrm{(a)}$ $\mathrm{Ext}^{i}_{\mathfrak{g}_{0}\mathfrak{Qco}(X)}(\mathscr{R}_{X},\mathscr{F})\cong H^{i}_{\mathfrak{g}_{0}\mathfrak{Qco}(X)}(\mathscr{F}, X)$

$\mathrm{(b)}$ Let $\mathscr{L}$ be a locally free $\mathscr{R}_{X}$-module of finite rank, then we have 
$$\mathrm{Ext}^{i}_{\mathfrak{g}_{0}\mathfrak{Qco}(X)}(\mathscr{F}\otimes_{\mathscr{R}_{X}}\mathscr{L},\mathscr{G})\cong\mathrm{Ext}^{i}_{\mathfrak{g}_{0}\mathfrak{Qco}(X)}(\mathscr{F},\mathscr{G}\otimes_{\mathscr{R}_{X}}\mathscr{L}\,\check{\vrule height1.3ex width0pt}\,)
$$
\end{prop}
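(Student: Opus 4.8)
The plan for part (a) is to reduce to the computation carried out in [HR, Ch.III, 6.3], now performed on the ringed space $(X,\mathscr{R}_{X})$. First I would record that for every object $\mathscr{F}$ of $\mathfrak{g}_{0}\mathfrak{Qco}(X)$ there is a natural isomorphism of abelian groups
$$\mathrm{Hom}_{\mathscr{R}_{X}}(\mathscr{R}_{X},\mathscr{F})\;\cong\;\Gamma(X,\mathscr{F}),$$
given by evaluating an $\mathscr{R}_{X}$-linear map at the global section $1$. Since the category $\mathfrak{g}_{0}\mathfrak{Qco}(X)$ has enough injectives (as shown in the preceding proposition), the functors $\mathrm{Ext}^{i}_{\mathfrak{g}_{0}\mathfrak{Qco}(X)}(\mathscr{R}_{X},-)$ and $H^{i}_{\mathfrak{g}_{0}\mathfrak{Qco}(X)}(X,-)$ are both the right derived functors of this one functor $\Gamma(X,-)$, so by uniqueness of derived functors they coincide. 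This settles (a) with no further work.

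For part (b) I would transport an injective resolution across a tensor--hom adjunction, following the pattern of [HR, Ch.III, 6.7]. Writing $\mathscr{L}^{\vee}$ for the dual, the two ingredients are: (i) a natural isomorphism
$$\mathrm{Hom}_{\mathscr{R}_{X}}(\mathscr{A}\otimes_{\mathscr{R}_{X}}\mathscr{L},\mathscr{B})\;\cong\;\mathrm{Hom}_{\mathscr{R}_{X}}(\mathscr{A},\mathscr{B}\otimes_{\mathscr{R}_{X}}\mathscr{L}^{\vee})$$
valid for all $\mathscr{A},\mathscr{B}\in\mathfrak{g}_{0}\mathfrak{Qco}(X)$; and (ii) exactness of the functors $-\otimes_{\mathscr{R}_{X}}\mathscr{L}$ and $-\otimes_{\mathscr{R}_{X}}\mathscr{L}^{\vee}$, which holds because $\mathscr{L}$ is locally a finite direct sum of copies of $\mathscr{R}_{X}$. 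Granting these, I would choose an injective resolution $\mathscr{G}\to\mathscr{I}^{\bullet}$ in $\mathfrak{g}_{0}\mathfrak{Qco}(X)$ and check that $\mathscr{I}^{\bullet}\otimes_{\mathscr{R}_{X}}\mathscr{L}^{\vee}$ is again an injective resolution, now of $\mathscr{G}\otimes_{\mathscr{R}_{X}}\mathscr{L}^{\vee}$: it stays exact by (ii), and each term remains injective because $\mathrm{Hom}_{\mathscr{R}_{X}}(-,\mathscr{I}^{j}\otimes_{\mathscr{R}_{X}}\mathscr{L}^{\vee})\cong\mathrm{Hom}_{\mathscr{R}_{X}}((-)\otimes_{\mathscr{R}_{X}}\mathscr{L},\mathscr{I}^{j})$ by (i) is the composite of the exact functor $-\otimes_{\mathscr{R}_{X}}\mathscr{L}$ with the exact functor $\mathrm{Hom}_{\mathscr{R}_{X}}(-,\mathscr{I}^{j})$. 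Applying (i) degreewise identifies the complexes $\mathrm{Hom}_{\mathscr{R}_{X}}(\mathscr{F}\otimes_{\mathscr{R}_{X}}\mathscr{L},\mathscr{I}^{\bullet})$ and $\mathrm{Hom}_{\mathscr{R}_{X}}(\mathscr{F},\mathscr{I}^{\bullet}\otimes_{\mathscr{R}_{X}}\mathscr{L}^{\vee})$, and passing to cohomology gives the claimed $\mathrm{Ext}^{i}(\mathscr{F}\otimes_{\mathscr{R}_{X}}\mathscr{L},\mathscr{G})\cong\mathrm{Ext}^{i}(\mathscr{F},\mathscr{G}\otimes_{\mathscr{R}_{X}}\mathscr{L}^{\vee})$.

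The step I expect to be the main obstacle is establishing ingredient (i) as an isomorphism in $\mathfrak{g}_{0}\mathfrak{Qco}(X)$, not merely of underlying $\mathscr{O}_{X}$-modules. Locally, where $\mathscr{L}\cong\mathscr{R}_{X}^{\oplus n}$, the underlying isomorphism is the evident one and glues; the genuine content is that it intertwines the $\mathfrak{g}_{0}$-actions, i.e. is $\mathscr{R}_{X}$-linear. This forces one to pin down the correct $\mathfrak{g}_{0}$-structure on $\mathscr{L}^{\vee}$, namely the contragredient action $(x\cdot\phi)(m)=x(\phi(m))-\phi(x\cdot m)$, and to verify its compatibility with the diagonal action on $\otimes_{\mathscr{R}_{X}}$; this is exactly where the Leibniz rule, equivalently the cocommutative Hopf structure of $\mathcal{U}(\mathfrak{g}_{0})$ underlying the construction of $\mathscr{R}_{X}$, is used. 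Once this equivariance and the consequent preservation of injectives are confirmed, the remaining homological bookkeeping is routine and follows the classical argument verbatim.
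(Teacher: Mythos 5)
Your proposal is correct and takes essentially the same approach as the paper: part (a) is the identical observation that $\mathrm{Hom}_{\mathscr{R}_{X}}(\mathscr{R}_{X},-)=\Gamma(X,-)$ so the derived functors agree, and part (b) rests on the same two ingredients, namely the tensor--hom adjunction for a locally free $\mathscr{R}_{X}$-module of finite rank and the consequent preservation of injectives under tensoring with $\mathscr{L}^{\vee}$. The only cosmetic difference is the final step --- the paper concludes by comparing effaceable $\delta$-functors in $\mathscr{G}$ via universality, whereas you transport an injective resolution of $\mathscr{G}$ across the adjunction degreewise --- and your explicit attention to the $\mathfrak{g}_{0}$-equivariance of that adjunction (the contragredient action on $\mathscr{L}^{\vee}$) is a point the paper passes over more quickly.
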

in which $\mathscr{L}\,\check{\vrule height1.3ex width0pt} =\mathscr{H}om_{\mathscr{R}_{X}}(\mathscr{L},\mathscr{R}_{X})$ is the dual of $\mathscr{L}$.
\begin{proof}
The functors $\mathrm{Hom}(\mathfrak{R}_{X},\cdot)$ and $\Gamma(X,\cdot)$ are identical, so their derived functors (as functors from $\mathfrak{g}_{0}\mathfrak{Qco}(X)$ to $\mathfrak{Ab}$) are the same. This proves $(a)$. 

Now if $\mathscr{I}\in\mathrm{Ob}\,\mathfrak{g}_{0}\mathfrak{Qco}(X)$ is injective, the functor $\mathrm{Hom}(\cdot,\mathscr{I})$ is exact. Now since $\mathscr{L}$ is locally free of finite rank, so is its dual $\mathscr{L}\,\check{\vrule height1.3ex width0pt}$. Then $\cdot\otimes_{\mathscr{R}_{X}}\mathscr{L}\,\check{\vrule height1.3ex width0pt}$ is also an exact functor. Hence $\mathrm{Hom}(\cdot\otimes_{\mathscr{R}_{X}}\mathscr{L}\,\check{\vrule height1.3ex width0pt},\mathscr{I})$ is also exact. By [HR, Ch. II, \S 5, Ex.1], we have 
$$\mathrm{Hom}_{\mathscr{R}_{X}}(\mathscr{F}\otimes_{\mathscr{R}_{X}}\mathscr{L}\,\check{\vrule height1.3ex width0pt},\mathscr{I})\cong\mathrm{Hom}_{\mathscr{R}_{X}}(\mathscr{F},\mathscr{L}\otimes_{\mathscr{R}_{X}}\mathscr{I})
$$
for any sheaf of $\mathscr{R}_{X}$ modules $\mathscr{F}$. Since $\mathfrak{g}_{0}\mathfrak{Qco}(X)$ is the full subcategory of $\mathfrak{g}_{0}\mathfrak{Mod}(X)$, these $\mathrm{Hom}$ sets can also be obtained in category $\mathfrak{g}_{0}\mathfrak{Qco}(X)$,
$$\mathrm{Hom}_{\mathfrak{g}_{0}\mathfrak{Qco}(X)}(\mathscr{F}\otimes_{\mathscr{R}_{X}}\mathscr{L}\,\check{\vrule height1.3ex width0pt},\mathscr{I})\cong\mathrm{Hom}_{\mathfrak{g}_{0}\mathfrak{Qco}(X)}(\mathscr{F},\mathscr{L}\otimes_{\mathscr{R}_{X}}\mathscr{I})
$$
Therefore the functor $\mathrm{Hom}_{\mathfrak{g}_{0}\mathfrak{Qco}(x)}(\cdot,\mathscr{L}\otimes_{\mathscr{R}_{X}}\mathscr{I})$ is also exact. This proves that $\mathscr{L}\otimes_{\mathscr{R}_{X}}\mathscr{I}$ is also injective. Similarly, tensoring with $\mathscr{L}\,\check{\vrule height1.3ex width0pt}$ also preserves injectives.

Now the case $i=0$ in $\mathrm{(b)}$ follows from the adjunction above. For general case, note that both sides are $\delta$-functors in $\mathscr{G}$ from $\mathfrak{g}_{0}\mathfrak{Qco}(X)$ to $\mathfrak{Ab}$, since tensoring with a locally free sheaf is exact. Now since tensoring with $\mathscr{L}\,\check{\vrule height1.3ex width0pt}$ preserves injectives, both sides are effaceable (When $\mathscr{G}$ is injective and $i\geq1$, both sides vanish). Then use [WC, Ch.II, \S 4, Ex.2.4.5] to conclude they are both universal, hence equal.
\end{proof}

The next striking result is that if $X$ is noetherian, $H^{i}_{\mathfrak{g}_{0}\mathfrak{Qco}(X)}(X,\cdot)$ coincides with the sheaf cohomology $H^{i}(X,\cdot)$ .

\begin{prop}
Suppose $X$ is noetherian, then any injective object in $\mathfrak{g}_{0}\mathfrak{Qco}(X)$ is flasque. Therefore, $H^{i}_{\mathfrak{g}_{0}\mathfrak{Qco}(X)}(X,\cdot)$ coincides with the sheaf cohomology $H^{i}(X,\cdot)$. 
\end{prop}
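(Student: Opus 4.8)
The plan is to prove the two assertions in turn: first that every injective object of $\mathfrak{g}_{0}\mathfrak{Qco}(X)$ is flasque, and then to deduce the comparison of cohomologies as a formal consequence. For the first and main assertion I would not reprove the classical flasqueness statement from scratch, but reduce to it by an adjunction argument of exactly the type already used in the proof of Proposition 3.3. The crucial structural input is Lemma 3.1: it exhibits $\mathscr{R}_{X}$ as free, hence flat, as a (left) $\mathscr{O}_{X}$-module, and the same PBW ordering with the roles of the two factors reversed shows it is free, hence flat, as a right $\mathscr{O}_{X}$-module as well. This two-sided flatness is what drives everything.

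Concretely, I would consider the forgetful (restriction of scalars) functor $\mathrm{For}\colon\mathfrak{g}_{0}\mathfrak{Qco}(X)\to\mathfrak{Qco}(X)$ attached to the inclusion of sheaves of rings $\mathscr{O}_{X}\hookrightarrow\mathscr{R}_{X}$. Its left adjoint is extension of scalars $\mathscr{R}_{X}\otimes_{\mathscr{O}_{X}}(-)$; since $\mathscr{R}_{X}$ is quasi-coherent over $\mathscr{O}_{X}$, this functor carries quasi-coherent sheaves to quasi-coherent sheaves and so really lands in $\mathfrak{g}_{0}\mathfrak{Qco}(X)$, and the adjunction isomorphism $\mathrm{Hom}_{\mathscr{R}_{X}}(\mathscr{R}_{X}\otimes_{\mathscr{O}_{X}}\mathscr{G},\mathscr{F})\cong\mathrm{Hom}_{\mathscr{O}_{X}}(\mathscr{G},\mathrm{For}\,\mathscr{F})$ survives on the full quasi-coherent subcategories. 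Right-flatness of $\mathscr{R}_{X}$ makes $\mathscr{R}_{X}\otimes_{\mathscr{O}_{X}}(-)$ exact, so by the principle cited as [WC, Ch.II, \S 3] the right adjoint $\mathrm{For}$ preserves injectives. Thus an injective object $\mathscr{I}$ of $\mathfrak{g}_{0}\mathfrak{Qco}(X)$ becomes, after forgetting the $\mathfrak{g}_{0}$-action, an injective object of $\mathfrak{Qco}(X)$. Since $X$ is noetherian, the classical result [HR, Ch.III, 3.6] applies and $\mathrm{For}\,\mathscr{I}$ is flasque; because flasqueness is a property of the underlying sheaf of abelian groups alone, $\mathscr{I}$ itself is flasque.

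For the ``therefore'' part I would argue as follows. Given $\mathscr{F}\in\mathfrak{g}_{0}\mathfrak{Qco}(X)$, choose an injective resolution $0\to\mathscr{F}\to\mathscr{I}^{\bullet}$ in $\mathfrak{g}_{0}\mathfrak{Qco}(X)$, which exists by Proposition 3.3. By the first part each $\mathscr{I}^{j}$ is flasque, and flasque sheaves are acyclic for $\Gamma(X,\cdot)$ by Proposition 3.2. Hence $\mathscr{I}^{\bullet}$ is a $\Gamma$-acyclic resolution of $\mathscr{F}$ and computes the ordinary sheaf cohomology $H^{i}(X,\mathscr{F})$; but by definition it also computes $H^{i}_{\mathfrak{g}_{0}\mathfrak{Qco}(X)}(X,\mathscr{F})$, so the two agree, and together with Proposition 3.2 all three cohomology theories (in $\mathfrak{g}_{0}\mathfrak{Qco}(X)$, in $\mathfrak{g}_{0}\mathfrak{Mod}(X)$, and the sheaf cohomology $H^{i}(X,\cdot)$) coincide.

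The step I expect to require the most care is the reduction itself, namely verifying that $\mathrm{For}$ genuinely preserves injectivity at the quasi-coherent level: this needs the two-sided freeness of $\mathscr{R}_{X}$ over $\mathscr{O}_{X}$ (not merely the left-module statement recorded in Lemma 3.1) and the fact that the extension-of-scalars adjoint restricts to the quasi-coherent subcategories and remains exact there. If instead one insisted on imitating [HR, Ch.III, 3.6] intrinsically for $\mathscr{R}_{X}$-modules, the real obstacle would surface here: for an arbitrary open $U$ the complement $Z=X\setminus U$ need not be $\mathfrak{g}_{0}$-stable, so the coherent ideal sheaves $\mathscr{J}^{n}$ cutting out $Z$ are not $\mathscr{R}_{X}$-submodules, and one would be forced to replace them by the left ideals $\mathscr{R}_{X}\mathscr{J}^{n}$ and to control $\mathscr{H}om_{\mathscr{R}_{X}}(\mathscr{R}_{X}\mathscr{J}^{n},-)$ together with the colimit formula for $j_{*}$ — a markedly more delicate computation that the adjunction argument neatly sidesteps.
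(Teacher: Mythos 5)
Your proposal is correct and follows essentially the same route as the paper: the forgetful functor $\mathfrak{g}_{0}\mathfrak{Qco}(X)\to\mathfrak{Qco}(X)$ is right adjoint to the exact extension-of-scalars functor $\mathscr{R}_{X}\otimes_{\mathscr{O}_{X}}(-)$, hence preserves injectives, and [HR, Ch.III, Ex.\,3.6] then gives flasqueness, from which the comparison of cohomologies is formal. Your added observation that exactness of $\mathscr{R}_{X}\otimes_{\mathscr{O}_{X}}(-)$ really rests on the \emph{right} $\mathscr{O}_{X}$-module freeness of $\mathscr{R}_{X}$ (obtained from PBW with the two factors reordered), not merely the left-module statement of Lemma 3.1, is a point the paper glosses over and is worth recording.
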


\begin{proof}
Notice that we have an adjunction of functors:
$$\mathrm{Hom}_{\mathfrak{g}_{0}\mathfrak{Qco}(X)}(\mathscr{R}_{X}\otimes_{\mathscr{O}_{X}}\mathscr{F},\mathscr{G})\cong\mathrm{Hom}_{\mathfrak{Qco}(X)}(\mathscr{F},\mathscr{G})
$$
for $\mathscr{O}_{X}$-module $\mathscr{F}$ and $\mathscr{R}_{X}$-module $\mathscr{G}$. By Lemma 1, $\mathscr{R}_{X}\cong\mathcal{U}(\mathfrak{g}_{0})\otimes_{k}\mathscr{O}_{X}$ as $\mathscr{O}_{X}$-modules, hence the tensoring functor $\mathscr{R}_{X}\otimes_{O_{X}}$ is an exact functor from $\mathfrak{g}_{0}\mathfrak{Qco}(X)$ to $\mathfrak{Qco}(X)$. Therefore, the forgetful functor from $\mathfrak{Qco}(X)$ to $\mathfrak{g}_{0}\mathfrak{Qco}(X)$ is right adjoint to an exact functor, hence preserves injectives by [WC, Ch.II, \S 3]. Then for an injective object $\mathscr{I}$ in $\mathfrak{g}_{0}\mathfrak{Qco}(X)$, it is also injective in $\mathfrak{Qco}(X)$. By [HR, Ch.III, \S 3, Ex.3.6 (b)], it is flasque, hence acyclic for global section functor from $\mathfrak{Ab}(X)$ to $\mathfrak{Ab}$. Then we can conclude that $H^{i}_{\mathfrak{g}_{0}\mathfrak{Qco}(X)}(X,\cdot)$ coincides with the sheaf cohomology $H^{i}(X,\cdot)$.
\end{proof}

Given these two propositions, we are able to make some explicit computation of the $\mathfrak{g}_{0}$-cohomology of projective space. Firstly, we clarify the definition of twisted $\mathscr{R}_{X}$-module.

\begin{defi}
Suppose the underlying scheme $X$ is isomorphic to the projective space $\mathbf{P}^{r}_{k}$. we define the twisting sheaves $\mathscr{R}_{X}(i)$ by
$$\mathscr{R}_{X}(i):=\mathscr{R}_{X}\otimes_{\mathscr{O}_{X}}\mathscr{O}_{X}(i) 
$$
\end{defi}

In general case, for a sheaf of $\mathscr{R}_{X}$ modules $\mathscr{M}$, we denote by $\mathscr{M}(n)$ the $\mathfrak{g}_{0}$-twisted sheaf $\mathscr{M}\otimes_{\mathscr{R}_{x}}\mathscr{R}_{X}(n)$
, which is isomorphic to the usual twisted $\mathscr{O}_{X}$-module since
$$\mathscr{M}\otimes_{\mathscr{R}_{x}}\mathscr{R}_{X}(n)\cong\mathscr{M}\otimes_{\mathscr{R}_{x}}\mathscr{R}_{X}\otimes_{\mathscr{O}_{X}}\mathscr{O}_{X}(i)\cong\mathscr{M}\otimes_{\mathscr{O}_{x}}\mathscr{O}_{X}(n)
$$
so there is no difference twisting by either of the structure sheaf. 
\begin{theo}
Suppose X is isomorphic to  $P_{k}^{r}(r\geq 1)$ as a scheme and $E$ the set of is its closed points, in other words, the projectivization of an $r+1$-dimensional vector space. Then:
\begin{equation}\notag
\mathrm{Ext}^{l}_{\mathfrak{g}_{0}\mathfrak{Qco}(X)}(\mathscr{R}_{X}(i),\mathscr{R}_{X}(j))
=\begin{cases}
\mathcal{U}(\mathfrak{g}_{0})\otimes_{k}S^{i-j}(E^{\ast}) & \text{for $l=0$}\\
\mathcal{U}(\mathfrak{g}_{0})\otimes_{k}S^{-n-1+i-j}(E^{\ast})\otimes_{k}\Lambda^{r+1}(E^{\ast}) & \text{for $l=n$}\\
0 & \text{otherwise}
\end{cases}
\end{equation}
where $S^{m}$ is the homogeneous symmetric tensor of order m, and $S^{m}(E^{\ast})=0$ if $m<0$. $\Lambda^{r+1}(E^{\ast})$ is the highest degree terms in the exterior algebra of $E^{\ast}$.
\end{theo}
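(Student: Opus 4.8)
The plan is to peel off all the extra structure and reduce the computation to the classical cohomology of line bundles on $\mathbf{P}^r_k$, which the paper can invoke from [HR, Ch.III, \S 5]. First I would dispose of the twists. Since $\mathscr{R}_X(i)=\mathscr{R}_X\otimes_{\mathscr{O}_X}\mathscr{O}_X(i)$ is locally isomorphic to $\mathscr{R}_X$, it is a locally free $\mathscr{R}_X$-module of rank one, whose $\mathscr{R}_X$-dual is $\mathscr{R}_X(-i)$ (this uses local triviality of $\mathscr{O}_X(i)$ and should be recorded carefully, as $\mathscr{R}_X$ is noncommutative). Applying the $\mathrm{Ext}$-duality of Proposition 3.4(b) with $\mathscr{F}=\mathscr{R}_X$, $\mathscr{L}=\mathscr{R}_X(i)$, $\mathscr{G}=\mathscr{R}_X(j)$, and using that twisting is insensitive to which structure sheaf one tensors over, gives
\[
\mathrm{Ext}^{l}_{\mathfrak{g}_{0}\mathfrak{Qco}(X)}(\mathscr{R}_X(i),\mathscr{R}_X(j))\cong\mathrm{Ext}^{l}_{\mathfrak{g}_{0}\mathfrak{Qco}(X)}\!\bigl(\mathscr{R}_X,\ \mathscr{R}_X(j)\otimes_{\mathscr{R}_X}\mathscr{R}_X(-i)\bigr)=\mathrm{Ext}^{l}_{\mathfrak{g}_{0}\mathfrak{Qco}(X)}(\mathscr{R}_X,\mathscr{R}_X(d)),
\]
where $d=j-i$. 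Proposition 3.4(a) then identifies this with the $\mathfrak{g}_{0}$-sheaf cohomology $H^{l}_{\mathfrak{g}_{0}\mathfrak{Qco}(X)}(X,\mathscr{R}_X(d))$.

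Next I would strip the $\mathfrak{g}_{0}$-structure out of the cohomology. Since $X\cong\mathbf{P}^{r}_{k}$ is noetherian, Proposition 3.5 identifies $H^{l}_{\mathfrak{g}_{0}\mathfrak{Qco}(X)}(X,-)$ with ordinary sheaf cohomology $H^{l}(X,-)$, so it suffices to compute $H^{l}(X,\mathscr{R}_X(d))$. By Lemma 3.1, $\mathscr{R}_X\cong\mathcal{U}(\mathfrak{g}_{0})\otimes_{k}\mathscr{O}_X$ as $\mathscr{O}_X$-modules, hence $\mathscr{R}_X(d)\cong\mathcal{U}(\mathfrak{g}_{0})\otimes_{k}\mathscr{O}_X(d)$, which after fixing a PBW basis of $\mathcal{U}(\mathfrak{g}_{0})$ is a (possibly infinite) direct sum of copies of $\mathscr{O}_X(d)$. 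Because $X$ is a noetherian space, sheaf cohomology commutes with this direct sum, yielding
\[
H^{l}(X,\mathscr{R}_X(d))\cong\mathcal{U}(\mathfrak{g}_{0})\otimes_{k}H^{l}\!\bigl(\mathbf{P}^{r}_{k},\mathscr{O}_X(d)\bigr).
\]

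Finally I would insert Serre's computation of $H^{l}(\mathbf{P}^{r}_{k},\mathscr{O}(d))$: it is $S^{d}(E^{\ast})$ for $l=0$ (vanishing when $d<0$), it is the contribution of the canonical bundle $\omega_{\mathbf{P}^{r}}$ in degree $-r-1-d$ for $l=r$, identified with $S^{-r-1-d}(E^{\ast})\otimes_{k}\Lambda^{r+1}(E^{\ast})$, and it vanishes for $0<l<r$. Substituting $d=j-i$ (so that $-r-1-d=-r-1+i-j$) and tensoring each case with $\mathcal{U}(\mathfrak{g}_{0})$ reproduces the three cases of the statement.

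The computation is essentially a chain of formal reductions, so the genuine content is bookkeeping of the $\mathfrak{g}_{0}$-action. The step I expect to be most delicate is verifying that the $\mathcal{U}(\mathfrak{g}_{0})$ factor persists \emph{as a tensor factor} through Proposition 3.5, i.e. that the comparison between $\mathfrak{g}_{0}$-cohomology and ordinary cohomology respects the $\mathfrak{g}_{0}$-action inherited from $\mathscr{R}_X$, together with the justification that cohomology commutes with the infinite direct sum indexed by the PBW basis (for which noetherianness of $X$ is essential). A secondary and more elementary point, needed to license Proposition 3.4(b), is confirming that $\mathscr{R}_X(i)$ is invertible over $\mathscr{R}_X$ with dual $\mathscr{R}_X(-i)$, which follows from the local triviality of $\mathscr{O}_X(i)$ but deserves an explicit check because of the noncommutativity of $\mathscr{R}_X$.
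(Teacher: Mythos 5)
Your proof follows essentially the same route as the paper's: the identities $\mathscr{H}om_{\mathscr{R}_X}(\mathscr{R}_X(n),\mathscr{R}_X)\cong\mathscr{R}_X(-n)$ and $\mathscr{R}_X(m)\otimes_{\mathscr{R}_X}\mathscr{R}_X(n)\cong\mathscr{R}_X(m+n)$ feed into Proposition 3.4(a),(b) and Proposition 3.5 to reduce everything to $H^{l}(X,\mathcal{U}(\mathfrak{g}_0)\otimes_k\mathscr{O}_X(d))$ via Lemma 3.1, and Serre's computation finishes; your justification that cohomology commutes with the infinite direct sum over a PBW basis (using noetherianness of $X$) is actually more careful than the paper's bare assertion that exactness of $\mathcal{U}(\mathfrak{g}_0)\otimes_k(\cdot)$ lets it pass through cohomology. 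The one caveat is a sign: with $d=j-i$ your degree-zero term is $S^{j-i}(E^{\ast})$ while the theorem states $S^{i-j}(E^{\ast})$, but this discrepancy is already present in the paper's own proof (which writes $\mathscr{R}_X(j-i)=\mathscr{R}_X\otimes_{\mathscr{O}_X}\mathscr{O}_X(i-j)$, in conflict with its Definition 3.2), so it is a convention issue inherited from the source rather than a gap in your argument.
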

\begin{proof}
Since $\mathscr{O}_{X}(n)$'s are locally free  $\mathscr{O}_{X}$-modules, we can see that $\mathscr{R}_{X}(n)$'s are locally free $\mathscr{R}_{X}$-modules. Furthermore, we have
$$
\mathscr{R}_{X}(n)\,\check{\vrule height1.3ex width0pt}=\mathscr{H}om_{\mathscr{R}_{X}}(\mathscr{R}_{X}(n),\mathscr{R}_{X})\cong\mathscr{H}om_{\mathscr{O}_{X}}(\mathscr{O}_{X}(n),\mathscr{R}_{X})\cong\mathscr{R}_{X}(-n)
$$
$$
\mathscr{R}_{X}(m)\otimes_{\mathscr{R}_{X}}\mathscr{R}_{X}(n)=\mathscr{R}_{X}\otimes_{\mathscr{O}_{X}}\mathscr{O}_{X}(m)\otimes_{\mathscr{R}_{X}}\mathscr{R}_{X}\otimes_{\mathscr{O}_{X}}\mathscr{O}_{X}(n)\cong\mathscr{R}_{X}(m+n)
$$
Therefore, using two propositions above, we have the following computation
\begin{align}\notag
\mathrm{Ext}^{l}_{\mathfrak{g}_{0}\mathfrak{Qco}(X)}(\mathscr{R}_{X}(i),\mathscr{R}_{X}(j))
&=\mathrm{Ext}^{l}_{\mathfrak{g}_{0}\mathfrak{Qco}(X)}(\mathscr{R}_{X},\mathscr{R}_{X}(j)\otimes_{\mathscr{R}_{X}}\mathscr{R}_{X}(-i))\\\notag
&=\mathrm{Ext}^{l}_{\mathfrak{g}_{0}\mathfrak{Qco}(X)}(\mathscr{R}_{X},\mathscr{R}_{X}(j-i))\\\notag
&=H^{l}_{\mathfrak{g}_{0}\mathfrak{Qco}(X)}(X,\mathscr{R}_{X}(j-i))\\\notag
&=H^{l}(X,\mathscr{R}_{X}(j-i))\\\notag
\end{align}
Now we have isomorphism of $\mathscr{O}_{X}$-modules
$$\mathscr{R}_{X}(j-i)=\mathscr{R}_{X}\otimes_{\mathscr{O}_{X}}\mathscr{O}_{X}(i-j)\cong\mathcal{U}(\mathfrak{g}_{0})\otimes_{k}\mathscr{O}_{X}\otimes_{\mathscr{O}_{X}}\mathscr{O}_{X}(i-j)\cong\mathcal{U}(\mathfrak{g}_{0})\otimes_{k}\mathscr{O}_{X}(i-j)$$
Now $\mathcal{U}(\mathfrak{g}_{0})\otimes_{k}\cdot$ is an exact functor, so it commutes with cohomology
$$H^{l}(X,\mathscr{R}_{X}(j-i))\cong H^{l}(X,\mathcal{U}(\mathfrak{g}_{0})\otimes_{k}\mathscr{O}_{X}(i-j))\cong \mathcal{U}(\mathfrak{g}_{0})\otimes_{k}H^{l}(X,\mathscr{O}_{X}(i-j))
$$
Then the result follows from [HR, Ch.III, \S 5].
\end{proof}

This theorem allows us to compute morphisms between twisted sheaves in derived category of $\mathcal{C}h(\mathfrak{g}_{0}\mathfrak{Qco}(X))$. Furthermore, since the bounded complexes subcategory is the localizing subcategory [WC, Ch.X, \S 3, 10.3.15], we are able to compute the morphism sets in derived category of bounded complexes of $\mathfrak{g}_{0}$ quasi-coherent sheaves $\mathcal{D}^{b}(\mathfrak{g}_{0}\mathfrak{Qco}(X))$. More explicitly,
$$\mathrm{Hom}_{\mathcal{D}^{b}(\mathfrak{g}_{0}\mathfrak{Qco}(X))}(\mathscr{O}_{X}(i)[a],\mathscr{O}_{X}(j)[b])=\mathrm{Ext}_{\mathfrak{g}_{0}\mathfrak{Qco}(X)}^{b-a}(\mathscr{O}_{X}(i),\mathscr{O}_{X}(j))
$$

Furthermore, we can freely transfer these morphisms to $\mathcal{D}^{b}(\mathfrak{g}_{0}\mathfrak{Coh}(X))$ by citing a result in [FM, \S 3, 3.5] that also works for the ringed space $(X,\mathscr{R}_{X})$.

\begin{prop}
The natural functor
$$\mathcal{D}^{b}(\mathfrak{g}_{0}\mathfrak{Coh}(X))\rightarrow\mathcal{D}^{b}(\mathfrak{g}_{0}\mathfrak{Qco}(X))
$$
defines a equivalence of categories between $\mathcal{D}^{b}(\mathfrak{g}_{0}\mathfrak{Coh}(X))$ and the full triangulated subcategory $\mathcal{D}^{b}_{\mathfrak{Coh}}(\mathfrak{g}_{0}\mathfrak{Qco}(X))$ of bounded complexes of quasi-coherent sheaves with coherent cohomology.
\end{prop}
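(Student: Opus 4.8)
The plan is to verify that the hypotheses of the abstract criterion \cite[\S 3, 3.5]{FM} hold for the abelian category $\mathcal{A}=\mathfrak{g}_{0}\mathfrak{Qco}(X)$ together with its full subcategory $\mathcal{B}=\mathfrak{g}_{0}\mathfrak{Coh}(X)$. That criterion yields an equivalence $\mathcal{D}^{b}(\mathcal{B})\xrightarrow{\sim}\mathcal{D}^{b}_{\mathcal{B}}(\mathcal{A})$ as soon as one knows two structural facts: that $\mathcal{B}$ is a thick abelian subcategory of $\mathcal{A}$, closed under kernels, cokernels and extensions; and that $\mathcal{A}$ enjoys the approximation property that every object of $\mathcal{A}$ is the directed union of its subobjects lying in $\mathcal{B}$. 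Once both are checked for $\mathscr{R}_{X}$-modules, the derived-categorical part of the argument is purely formal and word-for-word identical to the classical $\mathscr{O}_{X}$-case, which is exactly why one may invoke \cite{FM} for the ringed space $(X,\mathscr{R}_{X})$.

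The first fact rests on $\mathscr{R}_{X}$ being a \emph{Noetherian} sheaf of rings. Indeed, by Lemma 1 we have $\mathscr{R}_{X}\cong\mathcal{U}(\mathfrak{g}_{0})\otimes_{k}\mathscr{O}_{X}$, so over an affine piece $U_{i}=\mathrm{Spec}\,A_{i}$ the sections are $\mathscr{R}_{X}(U_{i})\cong\mathcal{U}(\mathfrak{g}_{0})\otimes_{k}A_{i}$. Since $\mathfrak{g}_{0}$ is finite dimensional, the PBW filtration on $\mathcal{U}(\mathfrak{g}_{0})$ has associated graded the polynomial ring $\mathcal{S}(\mathfrak{g}_{0})$, which is Noetherian; filtering $\mathcal{U}(\mathfrak{g}_{0})\otimes_{k}A_{i}$ the same way gives associated graded $A_{i}[x_{1},\dots,x_{n}]$, Noetherian by the Hilbert basis theorem because $A_{i}$ is a Noetherian $k$-algebra ($X$ being Noetherian), and a filtered ring with Noetherian associated graded is itself left and right Noetherian. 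Consequently the coherence condition of Definition 3.2 is stable under kernels, cokernels and extensions, exactly as for $\mathscr{O}_{X}$-modules in \cite[Ch.II, \S 5]{HR}; the $\mathfrak{g}_{0}$-action is carried along automatically since every map in sight is $\mathscr{R}_{X}$-linear.

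The second, and only substantive, point is the approximation property: every quasi-coherent $\mathscr{R}_{X}$-module is the directed union of its coherent $\mathscr{R}_{X}$-submodules. I would establish this locally and then glue, following the pattern of Proposition 3.3. On each affine $U_{i}$ the equivalence $\mathfrak{g}_{0}\mathfrak{Qco}(U_{i})\cong\mathfrak{Mod}(\mathscr{R}_{X}(U_{i}))$ identifies the restriction of a quasi-coherent sheaf with a module over the Noetherian ring $\mathscr{R}_{X}(U_{i})$, which is visibly the directed union of its finitely generated submodules; applying the associated-sheaf construction and then the Noetherian extension argument that enlarges a coherent subsheaf defined on $U_{i}$ to a coherent subsheaf of the ambient module, as in \cite[Ch.II, \S 5]{HR}, produces the global statement. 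This is the step that requires genuine care, precisely because $\mathscr{R}_{X}$ is noncommutative in the $\mathcal{U}(\mathfrak{g}_{0})$-direction: one must be sure that the finitely generated $\mathscr{R}_{X}(U_{i})$-submodules produced on the overlaps can be organized into honest global coherent subsheaves, and it is the Noetherian hypothesis on $X$ together with the finite dimensionality of $\mathfrak{g}_{0}$ that makes this possible.

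With both properties in hand the equivalence is formal. Essential surjectivity follows by truncating a bounded complex with coherent cohomology and repeatedly using the approximation property to replace each term, up to quasi-isomorphism, by a coherent subsheaf, so that every object of $\mathcal{D}^{b}_{\mathfrak{Coh}}(\mathfrak{g}_{0}\mathfrak{Qco}(X))$ is isomorphic to the image of a bounded complex of coherent sheaves; full faithfulness is the standard calculus-of-fractions argument, showing that any roof representing a morphism between such complexes can be refined through $\mathfrak{g}_{0}\mathfrak{Coh}(X)$ because $\mathcal{B}$ is thick. I expect the approximation property of the previous paragraph to be the main obstacle; once it is in place, invoking \cite[\S 3, 3.5]{FM} for $(X,\mathscr{R}_{X})$ completes the argument.
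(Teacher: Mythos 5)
Your proposal is correct and follows essentially the same route as the paper, which offers no argument beyond the assertion that [FM, \S 3, 3.5] ``also works for the ringed space $(X,\mathscr{R}_{X})$.'' You go further by actually verifying the two hypotheses that make that citation legitimate --- thickness of $\mathfrak{g}_{0}\mathfrak{Coh}(X)$ via Noetherianity of $\mathscr{R}_{X}(U_{i})$ through the PBW filtration with commutative associated graded $A_{i}[x_{1},\dots,x_{n}]$, and the property that every quasi-coherent $\mathscr{R}_{X}$-module is the directed union of its coherent subsheaves --- which is exactly the content the paper leaves implicit.
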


\section{Geometry arising from representations for self-commuting $\mathfrak{g}_{1}$}
In this section, we suppose that our superalgebra $\mathfrak{g}=\mathfrak{g}_{0}\oplus\mathfrak{g}_{1}$ satisfies $[\mathfrak{g}_{1},\mathfrak{g}_{1}]=0$. 
\subsection{$\mathbb{Z}$-graded $\mathfrak{g}$-module category}

For the case $[\mathfrak{g}_{1},\mathfrak{g}_{1}]=0$, it's natural to make the following definition: Denote by $\mathcal{M}(\mathfrak{g})$ the left $\mathbb{Z}$-graded $\mathfrak{g}$-module category in which $\mathfrak{g}_{0}$ acts by degree 0 and $\mathfrak{g}_{1}$ acts by degree 1. In other words, a typical object in $\mathcal{M}(\mathfrak{g})$ is of the form $\displaystyle V=\oplus_{i\in\mathbb{Z}}V^{i}$,  in which
$$\mathfrak{g}_{0}V^{i}\subseteq V^{i}, \quad\mathfrak{g}_{1}V^{i}\subseteq V^{i+1}
$$
A simple observation shows that for $V\in \mathrm{Ob}\, \mathcal{M}(\mathfrak{g})$, we have $[\mathfrak{g}_{1},\mathfrak{g}_{1}]V=0$. The set of morphisms between two objects in $\mathcal{M}(\mathfrak{g})$ is given by the degree preserving $\mathfrak{g}$-module homomorphisms. Let $\mathcal{M}^{b}(\mathfrak{g})$ be the full subcategory in $\mathcal{M}(\mathfrak{g})$ consisting of finitely generated(=finite dimensional over k) $\mathfrak{g}$-modules.

One can see that generally, this category might not be very useful (for example, if $[\mathfrak{g}_{1},\mathfrak{g}_{1}]=\mathfrak{g}$, this category is trivial). However, for our special case $[\mathfrak{g}_{1},\mathfrak{g}_{1}]=0$, the constructions above are natural generalization of $\mathbb{Z}$-graded module category of exterior algebra. In other words, in the special case $\mathfrak{g}_{0}$=0, since in that case the universal enveloping algebra of $\mathfrak{g}$, which we denote by $\mathcal{U}(\mathfrak{g})$, is just $\Lambda(\mathfrak{g}_{1})$.

Next, we introduce some operations in $\mathcal{M}(\mathfrak{g})$

\,

\noindent a) Let $V$ be an object in $\mathcal{M}(\mathfrak{g})$. For $m\in\mathbb{Z}$, let
$$V(m)^{i}=V^{i-m},\quad V(m)=\bigoplus_{i\in \mathbb{Z}}V(m)^{i}
$$
with action of $\mathfrak{g}$ on $V(m)$ induced be the action of $\mathfrak{g}$ on $V$. It is clear that the degree shift $V\rightarrow V(m)$ gives a functor $\mathcal{M}(\mathfrak{g})\rightarrow\mathcal{M}(\mathfrak{g})$ which is the identity on morphisms.

\noindent b) For two objects $V, W$ in $\mathcal{M}(\mathfrak{g})$, we define their tensor product over k to be
$$V\otimes W=\bigoplus_{l\in\mathbb{Z}}(V\otimes W)^{l},\quad (V\otimes W)^{l}=\bigoplus_{i+j=l}V^{i}\otimes_{k}W^{j}
$$
and the action is defined by
$$x(v\otimes w)=xv\otimes w+(-1)^{deg\, v}v\otimes xw,\quad x\in \mathfrak{g},\ v\in V, \ w\in W.
$$

\noindent c) Any left graded $\mathfrak{g}$-module $V$ has a canonical structure of a right graded $\mathfrak{g}$-module $V_{r}$ given by
$$vx=(-1)^{deg\,v\,deg\,x}xv
$$
It's clear that the map $V\rightarrow V_{r}$ produces a functor that gives an isomorphism of the categories of left and right graded $\mathfrak{g}$-modules.

\noindent d) Let $V^{\ast}=Hom_{k}(V,k)$ be supplied with the $\mathfrak{g}$-action 
$$(x\phi)(v)=(-1)^{deg\,x\,deg\,v}\phi(xv)
$$
Together with the grading $(V^{\ast})^{i}=\mathrm{Hom}_{k}(V^{-i},k)$, this defines a graded $\mathfrak{g}$-module structure on $V^{\ast}$.

\subsection{The category of rigid complexes that is equivalent to $\mathcal{M}^{b}(\mathfrak{g})$}
Any graded $\mathfrak{g}$-module $V=\oplus_{i\in\mathbb{Z}}V^{i}$ is determined by a family of complexes of $\mathfrak{g}_{0}$-modules indexed by elements $x\in\mathfrak{g}_{1}$
\begin{equation}\notag
L_{x}(V)=\begin{tikzcd}
...\arrow[r] & V^{j-1}\arrow[r,"d^{j-1}(x)"] & V^{j} \arrow[r,"d^{j}(x)"] & V^{j+1}\arrow[r] & ...
\end{tikzcd}
\end{equation}
where $d^{j}(x):V^{j}\rightarrow V^{j+1}$ is the multiplication with $x$, and it's straight forward to check those $d^{j}(x)$ are morphisms of $\mathfrak{g}_{0}$-module. It is clear that $L_{x}(V)$ and $L_{cx}(V)$ for $c\in k^{\ast}$ are isomorphpic, so the the $L_{x}(V)$ are indexed essentially by points of the projectivization $\mathcal{P}(\mathfrak{g}_{1})$ of $\mathfrak{g}_{1}$, which is the self-commuting cone $X$ of $\mathfrak{g}$.

Now using language of $\mathfrak{g}_{0}$-algebraic geometry, we say that a complex of objects in $\mathfrak{g}_{0}\mathfrak{Qco}(X)$ is rigid if it is isomorphic to a complex

$L:$\begin{tikzcd}
...\arrow[r] & V^{j-1}\otimes\mathscr{O}_{X}(j-1)\arrow[r,"d^{j-1}"] & V^{j}\otimes\mathscr{O}_{X}(j) \arrow[r,"d^{j}"] & V^{j+1}\otimes\mathscr{O}_{X}(j+1)\arrow[r] & ...
\end{tikzcd}
in which $d^{j}:V^{j}\otimes\mathscr{O}_{X}(j) \rightarrow V^{j+1}\otimes\mathscr{O}_{X}(j+1)$ are morphisms that commute with $\mathfrak{g}_{0}$ action (in other words, morphisms in $\mathfrak{g}_{0}\mathfrak{Mod}(X)$). A rigid complex is said to be finite if it is bounded and all $V^{j}$ are finite dimensional.

Given a rigid complex $L$, we can construct the graded $\mathfrak{g}$-module $V(L)$ by setting
$$V(L)=\bigoplus_{j\in\mathbb{Z}}V(L)^{j}=\bigoplus_{j\in\mathbb{Z}}\Gamma(L^{j}(-j))
$$
Now $g_{0}$ has a induced action on $\Gamma(L^{j}(-j))$, and the action of elements in $\mathfrak{g}_{1}$ on $V(L)$ is defined by 
$$xv=(-1)^{j}(id\otimes s_{x})\Gamma(d^{j}(-j))(v)
$$
for $x\in\mathfrak{g}_{1}$, $v\in V(L)^{j}$. Here we identify $\Gamma(\mathscr{O}(1))\cong \mathfrak{g}_{1}^{\ast}$ and $s_{x}$ is the evaluation map. Notice that $d^{j}d^{j+1}=0$ ensures that the action defined above is actually an action subalgebra $\mathfrak{g}_{1}\subseteq\mathfrak{g}$. To conclude we can assemble a $\mathfrak{g}$-action, we need to check the following relation:
$$[x,y]v=x(y(v))-y(x(v))
$$
for $x\in\mathfrak{g}_{0},\,y\in\mathfrak{g}_{1}$. By linearity, it suffices to obtain a basis $\{e_{1},e_{2},...,e_{n}\}$ for vector space $\mathfrak{g}_{1}$ and check the relation for $y=e_{k}$, so we make the following computations: denote $a=\Gamma(d^{j}(-j))$, we have a commutative diagram obtained by commutativity of $\mathfrak{g}_{0}$-action and $d^{j}$ (twisted by $\mathscr{O}_{X}(-j)$ and apply global section functor)
\begin{center}
\begin{tikzcd}
V^{j} \arrow[r, "a"] \arrow[d,"x"]
& V^{j+1}\otimes g_{1}^{\ast} \arrow[d, "x"] \\
V^{j} \arrow[r, "a" ]
& V^{j+1}\otimes g_{1}^{\ast}
\end{tikzcd}
\end{center}
we may suppose $a(v)=\sum_{i=1}^{n}\delta_{i}(v)\otimes e_{j}^{\ast}$, then
$$
[x,e_{k}](v)=(-1)^{j}(id\otimes s_{[x,e_{k}]} a(v))
=(-1)^{j}\sum_{i=1}^{n}e_{j}^{\ast}([x,e_{k}])\delta_{i}(v)
$$
$$x(e_{k}(v))=(-1)^{j}x.\delta_{k}(v), \quad e_{k}(x(v))=(-1)^{j}\delta_{k}(xv)
$$
Now $ax=xa$, gives us 
$$\displaystyle\sum_{i=1}^{n}\delta_{i}(xv)\otimes e_{i}^{\ast}=x.\sum_{i=1}^{n}\delta_{i}(v)\otimes e_{i}^{\ast}=\sum_{i=1}^{n}x\delta_{i}(v)\otimes e_{i}^{\ast}+\sum_{i=1}^{n}\delta_{i}(v)\otimes x. e_{i}^{\ast}
$$
Valuing on $e_{k}$, we have
$$\delta_{k}(xv)=x\delta_{k}(v)+\sum_{i=1}^{n}e_{i}^{\ast}([x,e^{k}])\delta_{i}(v)
$$
Multiply $(-1)^{j}$ we get$[x,e_{k}]v=x(e_{k}(v))-e_{k}(x(v))$. Therefore, we conclude that $L(V)$ does give a graded $\mathfrak{g}$-module.

Conversely, a graded $\mathfrak{g}$-module $V$ also gives a rigid complex as follows: the family of $\mathfrak{g}_{0}$-module homomorphisms $d^{j}(x):\,V^{j}\rightarrow V^{j+1}$ which is linear in $x\in\mathfrak{g}_{1}$ is the same as a linear map $V^{j}\rightarrow V^{j+1}\otimes \mathfrak{g}_{1}^{\ast}$, and this map determined a morphism $V^{j}\otimes\mathscr{O}_{X}\rightarrow V^{j+1}\otimes\mathscr{O}_{X}(1)$ on $X=\mathcal{P}(\mathfrak{g}_{1})$, which gives a differential $d^{j}:\,V^{j}\otimes\mathscr{O}_{X}(j)\rightarrow V^{j+1}\otimes\mathscr{O}_{X}(j+1)$ on $X=\mathcal{P}(\mathfrak{g}_{1})$ by tensoring with $\mathscr{O}_{X}(j)$. Now $[\mathfrak{g}_{1},\mathfrak{g}_{1}]=0$ ensures $d^{j+1}d^{j}=0$, and we can also check that these differentials commute with $\mathfrak{g}_{0}$-action: for $x\in\mathfrak{g}_{0}$, it suffices to check the commutativity of the following diagram:
\begin{center}
\begin{tikzcd}
V^{j}\otimes\mathscr{O}_{X}(j)(U) \arrow[r, "d^{j}(U)"] \arrow[d,"x(U)"]
& V^{j+1}\otimes\mathscr{O}_{X}(j+1)(U)  \arrow[d, "x(U)"] \\
V^{j}\otimes\mathscr{O}(j)_{X}(U) \arrow[r, "d^{j}(U)"]
& V^{j+1}\otimes\mathscr{O}_{X}(j+1) (U)
\end{tikzcd}
\end{center}
for any open set $U\subseteq X$ and $x\in\mathfrak{g}_{0}$. Now for some element $v\otimes f$ in $V^{j}\otimes\mathscr{O}_{X}(j)(U)$, where $v\in V^{j}$ and $f$ is a regular function on $U$, the upper horizontal map sends it to $\sum_{i=1}^{n}e_{i}(v)\otimes e_{i}^{\ast}f$, then by the action of $x$, it becomes 
$$x(U) \circ d^{j}(U)\,(v\otimes f)=\sum_{i=1}^{n}x.e_{i}(v)\otimes e_{i}^{\ast}f+\sum_{i=1}^{n}e_{i}(v)\otimes ((x.e_{i}^{\ast})f+e_{i}^{\ast} (x.f))
$$
On the other hand, $x$ send $v\otimes f$ to $x(v)\otimes f+v\otimes x.f$, then it is sent by the bottom arrow to
\begin{align}\notag
d^{j}(U) \circ x(U)\,(v\otimes f)&=\sum_{i=1}^{n}e_{i}(x(v))\otimes e_{i}^{\ast} f+\sum_{i=1}^{n}e_{i}(v)\otimes e_{i}^{\ast} (x.f)\\\notag
&=\sum_{i=1}^{n}([e_{i},x](v)+x.e_{i}(v))\otimes e_{i}^{\ast} f+\sum_{i=1}^{n}e_{i}(v)\otimes e_{i}^{\ast} (x.f)
\end{align}
Now suppose $[x,e_{i}]=\sum_{k=1}^{n}a_{ki}^{x}e_{k}$. Since $(x.e_{i}^{\ast})(e_{k})=-e_{i}^{\ast}([x,e_{k}])=-a_{ik}^{x}$, the action of $x$ on the $\mathfrak{g}_{1}^{\ast}$ is by negative transpose
$x.e_{i}^{\ast}=-\sum_{k=1}^{n}a_{ik}^{x}e_{i}^{\ast}$. Hence we have
$$
\sum_{i=1}^{n}[e_{i},x](v)\otimes e_{i}^{\ast} f=-\sum_{i=1}^{n}\sum_{k=1}^{n}a_{ki}^{x}e_{k}(v)\otimes e_{i}^{\ast} f=\sum_{k=1}^{n}e_{k}(v)\otimes(-\sum_{i=1}^{n}a_{ki}^{x}e_{i}^{\ast} f)=\sum_{k=1}^{n}e_{k}(v)\otimes (x.e_{k}^{\ast}) f
$$
Together with formulas above, we conclude that 
$$
d^{j}(U) \circ x(U)=x(U) \circ d^{j}(U)
$$
so the differentials $d^{j}$ are morphisms commuting with $\mathfrak{g}_{0}$-action. Therefore, from a graded $\mathfrak{g}$-module $V$, we get a rigid complex

\noindent\ $L(V):$\begin{tikzcd}
...\arrow[r] & V^{j-1}\otimes\mathscr{O}_{X}(j-1)\arrow[r,"d^{j-1}"] & V^{j}\otimes\mathscr{O}_{X}(j) \arrow[r,"d^{j}"] & V^{j+1}\otimes\mathscr{O}_{X}(j+1)\arrow[r] & ...
\end{tikzcd}
One can easily see that $V\rightarrow L(V)$ and $L\rightarrow V(L)$
are inverse operations of each other. This gives an one-to-one correspondence between graded $\mathfrak{g}$-module $\mathfrak{g}_{0}$-rigid complexes.

\subsection{Passing to the derived category}

Denote $\mathfrak{g}_{0}\mathrm{Rig}$ the category of rigid complexes in $\mathcal{C}h(\mathfrak{g}_{0}\mathfrak{Qco}(X))$. It's easy to check that the correspondence in the previous section extends to two functors
$$L: \mathcal{M}(\mathfrak{g})\rightarrow\mathcal{C}h^{b}(\mathfrak{g}_{0}\mathfrak{Coh}(X)),\quad V:\mathfrak{g}_{0}\mathrm{Rig}\rightarrow\mathcal{M}(\mathfrak{g})$$
that reveals an equivalence between $\mathcal{M}(\mathfrak{g})$ and $\mathfrak{g}_{0}\mathrm{Rig}$. It's obvious that the finite dimensional graded $\mathfrak{g}$-modules corresponds to rigid complexes with finite length and coherent objects. Moreover, the tensor product is also preserved under this identification

Now composing $L$ with the localization functor $\mathcal{C}h^{b}(\mathfrak{g}_{0}\mathfrak{Coh}(X))\rightarrow\mathcal{D}^{b}(\mathfrak{g}_{0}\mathfrak{Coh}(X))$, we get a functor $\Phi$
$$\Phi:\,\mathcal{M}^{b}(\mathfrak{g})\rightarrow\mathcal{D}^{b}(\mathfrak{g}_{0}\mathfrak{Coh}(X))
$$
In the next section, we will characterize an important feature of $\Phi$.

\section{An induced faithful functor $\tilde{\Phi}$ from stable representation category}

In this section, we continue to suppose $[\mathfrak{g}_{1},\mathfrak{g}_{1}]=0$. Let $\mathcal{P}\subset\mathcal{M}^{b}(\mathfrak{g})$ be the full subcategory of $\mathcal{M}^{b}(\mathfrak{g})$ formed by all projective objects. Two morphisms $f,g:\,V\rightarrow V'$ in $\mathcal{M}^{b}(\mathfrak{g})$ is said to be equivalent if $f-g$ can be factored as $V\rightarrow P\rightarrow V'$ with $P\in\mathrm{Ob}\,\mathcal{P}$, and we write $f\sim g$. It's clear that $\sim$ is a equivalent relation since if $f-g$ factors through $P$, $g-h$ factors through $Q$, then $f-h$ factors through $P\oplus Q$, which is also projective. Now define the category $\mathcal{M}^{b}(\mathfrak{g})/\mathcal{P}$ by
$$
\mathrm{Ob}\,\mathcal{M}^{b}(\mathfrak{g})/\mathcal{P}=\mathrm{Ob}\,\mathcal{M}^{b}(\mathfrak{g}),\quad \mathrm{Hom}_{\mathcal{M}^{b}(\mathfrak{g})/\mathcal{P}}(V, V')=\mathrm{Hom}_{\mathcal{M}^{b}(\mathfrak{g})}(V,V')/\sim
$$
We call this category the stable category of graded $\mathfrak{g}$-modules. 

The main result of this section is the following theorem

\begin{theo}
If $\mathfrak{g}_{0}$ is semisimple, then the functor $\Phi$ constructed above factors through the stable representation category $\mathcal{M}^{b}(\mathfrak{g})/\mathcal{P}$ and induce a faithful functor
$$\tilde{\Phi}: \mathcal{M}^{b}(\mathfrak{g})/\mathcal{P}\rightarrow\mathcal{D}^{b}(\mathfrak{g}_{0}\mathfrak{Coh}(X))
$$
\end{theo}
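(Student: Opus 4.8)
The plan is to prove the statement in two movements: first that $\Phi$ annihilates every morphism factoring through a projective, so that $\tilde{\Phi}$ is well defined on $\mathcal{M}^{b}(\mathfrak{g})/\mathcal{P}$; and second that $\Phi$ annihilates \emph{only} such morphisms, which is faithfulness. Throughout I write $r=\dim X$ and use the equivalence $L:\mathcal{M}(\mathfrak{g})\xrightarrow{\sim}\mathfrak{g}_{0}\mathrm{Rig}$ of Section 4.2, so that $\Phi=Q\circ L$ with $Q$ the localization $\mathcal{C}h^{b}(\mathfrak{g}_{0}\mathfrak{Coh}(X))\to\mathcal{D}^{b}(\mathfrak{g}_{0}\mathfrak{Coh}(X))$.

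\emph{Factoring through $\mathcal{P}$.} Since $[\mathfrak{g}_{1},\mathfrak{g}_{1}]=0$, the PBW theorem gives $\mathcal{U}(\mathfrak{g})\cong\Lambda(\mathfrak{g}_{1})\otimes_{k}\mathcal{U}(\mathfrak{g}_{0})$, which is free as a right $\mathcal{U}(\mathfrak{g}_{0})$-module; as $\mathfrak{g}_{0}$ is semisimple, every finite dimensional $\mathfrak{g}_{0}$-module is projective in the finite dimensional category, and the induction $\mathcal{U}(\mathfrak{g})\otimes_{\mathcal{U}(\mathfrak{g}_{0})}(-)$ is exact and left adjoint to the exact restriction, so the induced modules $\mathcal{U}(\mathfrak{g})\otimes_{\mathcal{U}(\mathfrak{g}_{0})}W$, together with their degree shifts and direct summands, exhaust the projectives of $\mathcal{M}^{b}(\mathfrak{g})$. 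For such a $P$ one has $P^{j}=\Lambda^{j}(\mathfrak{g}_{1})\otimes W$ with differential $d^{j}(x)=x\wedge(-)$, so at each point $\bar{x}\in X=\mathcal{P}(\mathfrak{g}_{1})$ the fibre of $L(P)$ is the Koszul complex of the nonzero vector $x$, which is exact. Being a complex of locally free sheaves that is exact on every fibre, $L(P)$ is exact, hence $\Phi(P)=L(P)\cong 0$ in $\mathcal{D}^{b}$. Because $\Phi$ is additive, any $f-g$ factoring through a projective maps to $0$, so $\Phi$ descends to $\tilde{\Phi}$.

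\emph{Faithfulness.} The first ingredient is the key lemma: for $U\in\mathcal{M}^{b}(\mathfrak{g})$, the complex $L(U)$ is acyclic iff $U_{x}=0$ for all $x\neq 0$, i.e. iff $X_{U}=\emptyset$, iff $U$ is projective; the final equivalence is exactly where semisimplicity of $\mathfrak{g}_{0}$ is used, via [DS, \S11]. The second ingredient is that rigid complexes admit no nonzero null-homotopies: a homotopy has components $s^{j}\colon V^{j}\otimes\mathscr{O}_{X}(j)\to V'^{j-1}\otimes\mathscr{O}_{X}(j-1)$, and $\mathrm{Hom}(\mathscr{O}_{X}(j),\mathscr{O}_{X}(j-1))=H^{0}(\mathscr{O}_{X}(-1))=0$ by the $\mathrm{Ext}$ computation of Section 3, so $s^{j}=0$; consequently $\mathrm{Hom}_{\mathcal{K}^{b}}(L(V),L(P))=\mathrm{Hom}_{\mathcal{M}^{b}(\mathfrak{g})}(V,P)$. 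Now suppose $\Phi(f)=0$. In the Verdier quotient $\mathcal{D}^{b}=\mathcal{K}^{b}/\mathcal{N}$ (with $\mathcal{N}$ the acyclics) a morphism is zero iff it factors through an acyclic object, so $L(f)=\beta\alpha$ in $\mathcal{K}^{b}$ for some acyclic $Z$ with $L(V)\xrightarrow{\alpha}Z\xrightarrow{\beta}L(V')$. If $Z$ could be taken rigid, say $Z=L(P)$, then by the key lemma $P$ is projective, the classes $\alpha,\beta$ are represented by $L(a),L(b)$ for module maps $a\colon V\to P,\ b\colon P\to V'$, and since rigid complexes have no nonzero homotopies the relation $L(b)L(a)\simeq L(f)$ forces $ba=f$ on the nose; thus $f$ factors through the projective $P$ and $\tilde{\Phi}$ is faithful.

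\emph{The main obstacle} is therefore the \textbf{rigidification} of the acyclic witness $Z$: an arbitrary bounded acyclic complex of $\mathfrak{g}_{0}$-coherent sheaves mixes twists and is not of the linear form $V^{\bullet}\otimes\mathscr{O}_{X}(\bullet)$, and one must replace it (without changing the factorization in $\mathcal{K}^{b}$) by a rigid acyclic complex. I expect to accomplish this through the spectral sequence assembling $\mathrm{Hom}_{\mathcal{D}^{b}}(L(V),L(V'))$ from the groups $\mathrm{Ext}^{q}_{\mathfrak{g}_{0}\mathfrak{Qco}(X)}(V^{i}\otimes\mathscr{O}_{X}(i),\,V'^{i+p}\otimes\mathscr{O}_{X}(i+p))$: the sheaf-level contributions survive only in degrees $0$ and $r$, the corner gives $E_{2}^{0,0}=\mathrm{Hom}_{\mathcal{M}^{b}(\mathfrak{g})}(V,V')$, and the transgression $d_{r+1}$ out of the top (Serre-dual) row is a Yoneda splice against the generator of $\mathrm{Ext}^{r}(\mathscr{O}_{X}(i),\mathscr{O}_{X}(i-r-1))\cong\Lambda^{r+1}\mathfrak{g}_{1}^{\ast}$, which is represented precisely by the Koszul--Bott complex, i.e. by $L$ of an induced projective from the first movement; hence its image should consist of maps factoring through projectives. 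Making this Yoneda-splice-through-the-Koszul-projective precise, while tracking the $\mathfrak{g}_{0}$-equivariance, is the delicate point. I finally note that the equivariant $\mathrm{Ext}$ between these non-free building blocks also carries contributions from the Lie algebra cohomology $H^{\bullet}(\mathfrak{g}_{0},-)$ (nonzero in degrees $3,5,\dots$), which enlarge $\mathrm{Hom}_{\mathcal{D}^{b}}$ in lower filtration and obstruct \emph{full} faithfulness; crucially they lie off the $E^{0,0}$ corner receiving the honest module maps, so they do not affect injectivity—explaining why only faithfulness is asserted.
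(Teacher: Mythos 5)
Your first half (the factorization through $\mathcal{P}$) is correct and takes a genuinely different route from the paper: you identify the projectives as summands of induced modules $\mathcal{U}(\mathfrak{g})\otimes_{\mathcal{U}(\mathfrak{g}_{0})}W$ and observe directly that $L$ of such a module is fibrewise a Koszul complex of a nonzero vector, hence exact. The paper instead proves that graded projectives coincide with projectives in $\mathcal{F}(\mathfrak{g})$ (Lemma 5.2) and then cites the Duflo--Serganova results ($X_{P}=\varnothing$ for projective $P$, and $\mathrm{supp}\,\mathcal{H}(M)\subseteq X_{M}$) to get acyclicity of $L(P)$. Your version is more self-contained; either works.

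The faithfulness half, however, has a genuine gap, and you have located it yourself: the entire argument hinges on replacing the acyclic witness $Z$ of the factorization $L(f)=\beta\alpha$ in $\mathcal{K}^{b}$ by a \emph{rigid} acyclic complex, and this rigidification is never carried out. The spectral-sequence sketch you offer in its place does not close the gap, and the one concrete claim you make about it is unjustified: you assert that the Lie-algebra-cohomology classes $H^{\bullet}(\mathfrak{g}_{0},-)$ appearing in the equivariant $\mathrm{Ext}$ groups ``lie off the $E^{0,0}$ corner \ldots so they do not affect injectivity.'' Convergence only says $E_{\infty}^{0,0}$ is a subquotient of the abutment; injectivity of $\mathrm{Hom}_{\mathcal{M}^{b}(\mathfrak{g})}(V,V')\rightarrow\mathrm{Hom}_{\mathcal{D}^{b}}(L(V),L(V'))$ requires showing that the differentials leaving the corner vanish on the classes of honest module maps, which is precisely the content that must be proved, not assumed. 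The paper's proof is built to avoid exactly this problem: it first applies the exact functor $\Psi=\mathscr{R}_{X}\otimes_{\mathscr{O}_{X}}F(\cdot)$, turning the terms $V^{j}\otimes_{k}\mathscr{O}_{X}(j)$ (which are \emph{not} free $\mathscr{R}_{X}$-modules, and whose equivariant $\mathrm{Ext}$ groups do carry $H^{\bullet}(\mathfrak{g}_{0},-)$, cf.\ Section 6.1) into free ones $V^{j}\otimes_{k}\mathscr{R}_{X}(j)$, so that Theorem 3.1 gives vanishing of all intermediate $\mathrm{Ext}$'s; it then argues by induction on truncation triangles that the only ambiguity in recovering $\varphi$ from $\tilde{\Psi}\Phi(\varphi)$ is the multiplication map $W^{i-n}\otimes\Lambda^{n}\mathfrak{g}_{1}\rightarrow W^{i}$, which vanishes when $W$ is \emph{reduced}; and finally it proves (Lemma 5.4, using Weyl's theorem) that every module splits as $P\oplus M$ with $P$ projective and $M$ reduced, so that a morphism killed by $\Phi$ factors through $P$. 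Your proposal contains no analogue of the reduction $\Psi$, of the reduced-module decomposition, or of the induction on truncations, and without one of these the key step remains open. A further, smaller issue: your ``key lemma'' needs the converse direction ($L(U)$ acyclic $\Rightarrow$ $U$ projective), which is strictly stronger than what the paper proves or uses and would itself require the full Duflo--Serganova projectivity criterion.
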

\noindent We will do this by two steps in the following subsections:

\,

\noindent \textbf{a)} If $V$ is projective, then $\Phi(V)\cong 0$, so any morphism that factors through objects in $\mathcal{P}$ will be sent to zero map by $\Phi$. Therefore, $\Phi$ factors through $\mathcal{M}^{b}(\mathfrak{g})/\mathcal{P}$.

\,

\noindent \textbf{b)} If $\Phi(f)=0$ for some morphism $f$, then $f$ factors through some projective element. Hence $\tilde{\Phi}: \mathcal{M}^{b}(\mathfrak{g})/\mathcal{P}\rightarrow\mathcal{D}^{b}(\mathfrak{g}_{0}\mathfrak{Coh}(X))$ is faithful.
\subsection{Factorization of $\Phi$ through $\mathcal{M}^{b}(\mathfrak{g})/\mathcal{P}$}

Suppose $P$ is an object in $\mathcal{P}$. Denote $\mathcal{F}(\mathfrak{g})$ to be the category of finite-dimensional $\mathfrak{g}$-modules (not neccessarily graded). In [DS, \S 10], we have the following lemma
\begin{lem} 
Suppose $\mathfrak{g}$ is quasi-reductive. If $M\in\mathrm{Ob}\,\mathcal{F}(\mathfrak{g})$ is projective, then its projective associated variety $X_{M}=\varnothing$
\end{lem}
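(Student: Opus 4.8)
The plan is to reduce the vanishing of the associated variety to a statement about a single odd element and then exploit freeness over the rank-one subalgebra it generates. First I would unwind the definition: the assertion $X_M=\varnothing$ means $M_x=\ker x_M/x_M M=0$ for every $\bar{x}\in X$, i.e. for every nonzero $x\in\mathfrak{g}_1$ with $[x,x]=0$. Since $x^2=\tfrac{1}{2}[x,x]=0$ in $\mathcal{U}(\mathfrak{g})$, such an $x$ generates a subalgebra $A_x\cong k[x]/(x^2)$, and $M_x$ is exactly the cohomology of $M$ with respect to the square-zero differential $x_M$. Thus the claim becomes: a projective $P$ is acyclic for the differential $x_P$, for every such $x$.

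Next I would use the structure theory of $\mathcal{F}(\mathfrak{g})$ for quasi-reductive $\mathfrak{g}$. Here $\mathfrak{g}_0$ is reductive, so every finite-dimensional $\mathfrak{g}_0$-module is projective in $\mathcal{F}(\mathfrak{g}_0)$, and consequently every projective object $P$ of $\mathcal{F}(\mathfrak{g})$ is a direct summand of an induced module $\mathrm{Ind}_{\mathfrak{g}_0}^{\mathfrak{g}}N=\mathcal{U}(\mathfrak{g})\otimes_{\mathcal{U}(\mathfrak{g}_0)}N$ for some finite-dimensional $\mathfrak{g}_0$-module $N$. Because the assignment $M\mapsto M_x$ is additive — if $M=M'\oplus M''$ then $\ker x_M$ and $x_M M$ split accordingly, whence $M_x=M'_x\oplus M''_x$ — it suffices to prove $(\mathrm{Ind}_{\mathfrak{g}_0}^{\mathfrak{g}}N)_x=0$.

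For the induced module I would invoke PBW. Ordering a homogeneous basis of $\mathfrak{g}$ so that $x$ comes first, the PBW monomials show that $\mathcal{U}(\mathfrak{g})$ is free as a left $A_x$-module: left multiplication by $x$ carries the monomials of $x$-exponent $0$ isomorphically onto those of $x$-exponent $1$ and annihilates the latter, so $\mathcal{U}(\mathfrak{g})\cong A_x\otimes_k W$ for the span $W$ of monomials not involving $x$, and this splitting is compatible with the right $\mathcal{U}(\mathfrak{g}_0)$-action. Tensoring over $\mathcal{U}(\mathfrak{g}_0)$ with $N$ then exhibits $\mathrm{Ind}_{\mathfrak{g}_0}^{\mathfrak{g}}N$ as a free left $A_x$-module. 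Finally, a free $A_x=k[x]/(x^2)$-module is acyclic for multiplication by $x$ (on $A_x$ itself one has $\ker x=xA_x=\mathrm{im}\,x$, and freeness propagates this to any direct sum), so $(\mathrm{Ind}_{\mathfrak{g}_0}^{\mathfrak{g}}N)_x=0$. Running over all $\bar{x}\in X$ gives $X_P=\varnothing$.

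The step I expect to be the main obstacle is the second one: pinning down exactly which objects of $\mathcal{F}(\mathfrak{g})$ are projective and justifying that they are summands of modules induced from $\mathfrak{g}_0$. This is where quasi-reductivity is essential — it guarantees that $\mathcal{F}(\mathfrak{g})$ is Frobenius with projectives relatively projective over $\mathfrak{g}_0$ — and care is needed to ensure that the left $A_x$-module splitting coming from PBW is genuinely compatible with the right $\mathcal{U}(\mathfrak{g}_0)$-action used to form the tensor product; a filtration and associated-graded argument may be cleaner here than tracking monomials directly. By comparison, the acyclicity computation and the additivity of $M\mapsto M_x$ are routine.
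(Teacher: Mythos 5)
Your argument is correct. The three steps all go through: additivity of $M\mapsto M_{x}$ under direct sums is immediate since $\ker x_{M}$ and $x_{M}M$ both split along a decomposition $M=M'\oplus M''$; a projective $P$ is a direct summand of $\mathcal{U}(\mathfrak{g})\otimes_{\mathcal{U}(\mathfrak{g}_{0})}(P|_{\mathfrak{g}_{0}})$ because the counit is a surjection in $\mathcal{F}(\mathfrak{g})$ and $P$ is projective (note this splitting needs only projectivity of $P$, not that the induced module itself be projective, so the part you flag as the main obstacle is actually lighter than you fear); and the PBW computation is clean: putting $x$ first in an ordered homogeneous basis, left multiplication by $x$ sends a PBW monomial not beginning with $x$ to the PBW monomial obtained by prepending $x$ (no reordering is needed since $x$ is already leftmost in the order) and kills monomials beginning with $x$ because $x^{2}=\tfrac{1}{2}[x,x]=0$ in $\mathcal{U}(\mathfrak{g})$. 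Since right multiplication by $\mathcal{U}(\mathfrak{g}_{0})$ preserves the odd part of a PBW monomial, the decomposition $\mathcal{U}(\mathfrak{g})=W\oplus xW$ is one of right $\mathcal{U}(\mathfrak{g}_{0})$-modules, so it survives the tensor product with $N$ and gives $\ker x=\operatorname{im}x$ on the induced module.

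For comparison: the paper does not prove this statement at all --- it is quoted verbatim from [DS, \S 10] and used as a black box. Your proof is therefore a genuine addition rather than a variant; it is essentially the standard Duflo--Serganova argument (projectives are summands of modules induced from $\mathfrak{g}_{0}$, and induced modules are free over the rank-one subalgebra $k[x]/(x^{2})$, hence acyclic for $x$). One small remark: quasi-reductivity enters only through the ambient setup of $\mathcal{F}(\mathfrak{g})$ (ensuring restriction to $\mathfrak{g}_{0}$ lands in a semisimple category, which is what [DS] assume); the splitting-plus-PBW mechanism itself does not use it, so your proof is if anything slightly more general than the citation requires.
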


Indeed, the projective objects in $\mathcal{M}^{b}(\mathfrak{g})$ are exactly the objects that are projective in $\mathcal{F}(\mathfrak{g})$, as the following lemma shows:

\begin{lem}
If $\mathfrak{g}_{0}$ is semisimple, then any object $M$ in $\mathcal{P}$ is projective in $\mathcal{F}(\mathfrak{g})$. Conversely, an object in $\mathcal{M}^{b}(\mathfrak{g})$ that is projective in $\mathcal{F}(\mathfrak{g})$ lies in $\mathcal{P}$.
\end{lem}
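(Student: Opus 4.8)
The plan is to identify the projective objects in both $\mathcal{M}^{b}(\mathfrak{g})$ and $\mathcal{F}(\mathfrak{g})$ as direct summands of induced modules, and then to pass between the graded and ungraded settings by decomposing splittings into homogeneous components. The starting observation is that $[\mathfrak{g}_{1},\mathfrak{g}_{1}]=0$ together with PBW gives $\mathcal{U}(\mathfrak{g})\cong\Lambda(\mathfrak{g}_{1})\otimes_{k}\mathcal{U}(\mathfrak{g}_{0})$, so $\mathcal{U}(\mathfrak{g})$ is free of finite rank over $\mathcal{U}(\mathfrak{g}_{0})$ with $\Lambda(\mathfrak{g}_{1})$ finite dimensional. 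Consequently the induction functor $\mathrm{Ind}=\mathcal{U}(\mathfrak{g})\otimes_{\mathcal{U}(\mathfrak{g}_{0})}(-)$ is exact, preserves finite dimensionality, and carries the $\mathbb{Z}$-grading (in which $\mathfrak{g}_{0}$ sits in degree $0$ and $\mathfrak{g}_{1}$ in degree $1$) to a $\mathbb{Z}$-grading on the output; I denote its graded version by $\mathrm{Ind}_{\mathrm{gr}}$. It is left adjoint to the exact restriction functor $\mathrm{Res}$, hence preserves projectives, and since $\mathfrak{g}_{0}$ is semisimple, Weyl complete reducibility makes every finite dimensional (graded) $\mathfrak{g}_{0}$-module projective in its own category. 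Therefore $\mathrm{Ind}(N)$ is projective in $\mathcal{F}(\mathfrak{g})$ for every $N$, and likewise $\mathrm{Ind}_{\mathrm{gr}}(N)$ is projective in $\mathcal{M}^{b}(\mathfrak{g})$.

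Next I would exploit the counit. For any $M$ the canonical surjection $\varepsilon\colon\mathrm{Ind}_{\mathrm{gr}}(\mathrm{Res}_{\mathrm{gr}}\,M)\twoheadrightarrow M$ is a degree-preserving $\mathfrak{g}$-morphism. For the first assertion, if $M\in\mathcal{P}$ is projective in $\mathcal{M}^{b}(\mathfrak{g})$, then $\varepsilon$ splits in $\mathcal{M}^{b}(\mathfrak{g})$, exhibiting $M$ as a graded direct summand of $\mathrm{Ind}_{\mathrm{gr}}(\mathrm{Res}_{\mathrm{gr}}\,M)$. Forgetting the grading, and since induction commutes with the forgetful functor, $M$ becomes a direct summand of $\mathrm{Ind}(\mathrm{Res}\,M)$, which is projective in $\mathcal{F}(\mathfrak{g})$ by the previous paragraph; a summand of a projective is projective, so $M$ is projective in $\mathcal{F}(\mathfrak{g})$.

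For the converse, suppose $M\in\mathcal{M}^{b}(\mathfrak{g})$ is projective in $\mathcal{F}(\mathfrak{g})$. Viewing $\varepsilon$ as an ungraded surjection, projectivity of $M$ in $\mathcal{F}(\mathfrak{g})$ yields an ungraded $\mathfrak{g}$-linear splitting $s$ with $\varepsilon s=\mathrm{id}_{M}$. The crucial step is to decompose $s=\sum_{d}s_{d}$ into homogeneous components, where $s_{d}$ raises degree by $d$, and to check that each $s_{d}$ is again a $\mathfrak{g}$-morphism: the relation $s(xm)=x\,s(m)$ separates degree by degree because $x\in\mathfrak{g}_{0}$ acts in degree $0$ and $x\in\mathfrak{g}_{1}$ acts in degree $1$, forcing $s_{d}(xm)=x\,s_{d}(m)$ for every $d$. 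Since $\varepsilon$ is homogeneous of degree $0$, comparing degrees in $\varepsilon s=\mathrm{id}_{M}$ gives $\varepsilon s_{0}=\mathrm{id}_{M}$, so $s_{0}$ is a degree-preserving splitting in $\mathcal{M}^{b}(\mathfrak{g})$. Hence $M$ is a graded summand of $\mathrm{Ind}_{\mathrm{gr}}(\mathrm{Res}_{\mathrm{gr}}\,M)$, which is projective in $\mathcal{M}^{b}(\mathfrak{g})$, and therefore $M\in\mathcal{P}$.

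I expect the only genuine subtlety to be the grading-compatibility step in the converse, namely that the homogeneous components of an ungraded $\mathfrak{g}$-morphism remain $\mathfrak{g}$-morphisms. This holds precisely because the $\mathbb{Z}$-grading is the internal grading of $\mathcal{U}(\mathfrak{g})$ with $\mathfrak{g}_{0}$ in degree $0$ and $\mathfrak{g}_{1}$ in degree $1$, so all action maps are homogeneous; the semisimplicity of $\mathfrak{g}_{0}$ enters only to guarantee projectivity of the induced modules on both sides. The remaining verifications (exactness of induction, surjectivity of the counit, and that a summand of a projective is projective) are routine.
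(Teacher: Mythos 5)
Your proposal is correct and follows essentially the same route as the paper's: both directions hinge on the induced module $\mathcal{U}(\mathfrak{g})\otimes_{\mathcal{U}(\mathfrak{g}_{0})}M$ with its PBW grading, the induction--restriction adjunction combined with semisimplicity of $\mathfrak{g}_{0}$, and the observation that homogeneous components of an ungraded $\mathfrak{g}$-morphism are again $\mathfrak{g}$-morphisms. The only packaging difference is in the converse, where the paper modifies an ungraded lifting against an \emph{arbitrary} graded surjection by projecting onto the appropriate graded pieces (so it never needs, and does not use, graded projectivity of the induced module or semisimplicity in that direction), whereas you split the counit and extract the degree-zero component $s_{0}$.
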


\begin{proof}
For an object $M$ in $\mathcal{P}$, we have a surjection $\mathcal{U}(\mathfrak{g})\otimes_{\mathcal{U}(\mathfrak{g}_{0})}M\rightarrow M$ defined by $\mathcal{U}(\mathfrak{g})$-module structure. Using PBW theorem, we have an isomorphism of $\mathcal{U}(\mathfrak{g}_{0})$-modules:
$$\mathcal{U}(\mathfrak{g})\cong\mathcal{U}(\mathfrak{g}_{0})\otimes_{k}\Lambda(\mathfrak{g}_{1})
$$
so we have a natural grading of \,$\mathcal{U}(\mathfrak{g})$, which makes it a graded module of itself (More specifically, a monomial $e_{1}e_{2}...e_{i},\,e_{i}\in\mathfrak{g}_{0} \ \text{or}\  \mathfrak{g}_{1}$ in $\mathcal{U}(\mathfrak{g})$ is of degree $d$ if and only if there are $d$ elements of $e_{1},\,e_{2},\,...,\,e_{n}$ in $\mathfrak{g}_{1}$). Now for $x\otimes m$ in $\mathcal{U}(\mathfrak{g})\otimes_{\mathcal{U}(\mathfrak{g}_{0})}M$, we count its degree by $\mathrm{deg}(x)+\mathrm{deg}(m)$. Then one can easily see that the 'evaluation map' $\mathcal{U}(\mathfrak{g})\otimes_{\mathcal{U}(\mathfrak{g}_{0})}M\rightarrow M$ is actually a morphism of graded modules. Then by projectivity of $M$, there exists a splitting map which makes $M$ a direct summand of $\mathcal{U}(\mathfrak{g})\otimes_{\mathcal{U}(\mathfrak{g}_{0})}M\rightarrow M$ as a finite-dimensional graded $\mathfrak{g}$-module, a fortiori as a finite-dimensional $\mathfrak{g}$-module.

Now since $\mathfrak{g}_{0}$ is semisimple, $M$ is projective in the category $\mathcal{F}(\mathfrak{g}_{0})$, the full subcategory of $\mathfrak{Rep}(\mathfrak{g}_{0})$ consists of finite-dimensional representations. Now $\mathcal{U}(\mathfrak{g})\otimes_{\mathcal{U}(\mathfrak{g}_{0})}\cdot$ is a functor from $\mathcal{F}(\mathfrak{g}_{0})$ to $\mathcal{F}(\mathfrak{g})$ which is left adjoint to the forgetful functor from $\mathcal{F}(\mathfrak{g}_{0})$ to $\mathcal{F}(\mathfrak{g})$. More specifically, we have
$$\rm{Hom}_{\mathfrak{g}}(\mathcal{U}(\mathfrak{g})\otimes_{\mathcal{U}(\mathfrak{g}_{0})}M,N)\cong\rm{Hom}_{\mathfrak{g}_{0}}(M,N)
$$
for any $\mathfrak{g}_{0}$-module $M$ and $\mathfrak{g}$-module $N$. Hence $\mathcal{U}(\mathfrak{g})\otimes_{\mathcal{U}(\mathfrak{g}_{0})}\cdot$ is left adjoint to an exact functor, which indicates that it preserves projectives. So $\mathcal{U}(\mathfrak{g})\otimes_{\mathcal{U}(\mathfrak{g}_{0})}M$ is a projective object in $\mathcal{F}(\mathfrak{g})$.
Now $M$ is its summand, so $M$ is also projective in $\mathcal{F}(\mathfrak{g})$.

Conversely, suppose $M$ is projective in the category $\mathcal{F}(\mathfrak{g})$. Given any surjection of graded $\mathfrak{g}$-modules $f:
\,N\rightarrow L$ and a graded morphism $g:\,M\rightarrow L$, by projectivity of $M$, we have a lifting $h:\,M\rightarrow N$ in $\mathcal{F}(\mathfrak{g})$, not necessarily graded. Now suppose $p_{i}$ is the projection $M\rightarrow M^{i}$, it is obvious a $\mathfrak{g}_{0}$-module homomorphism. Now define a graded map $h':\,M\rightarrow N$ by
$$ h'(\sum_{i\in\mathbb{Z}}x_{i})=\sum_{i\in\mathbb{Z}}p_{i}(h(x_{i})), \ x_{i}\in M^{i}
$$
It is a $\mathfrak{g}_{0}$-module homomorphism since $h$ and all $p^{i}$'s are. Now for any $e\in\mathfrak{g}_{1}$, we have
$$
h'(e\sum_{i\in\mathbb{Z}}x_{i})=\sum_{i\in\mathbb{Z}}p^{i+1}(h(ex_{i}))
=\sum_{i\in\mathbb{Z}}p^{i+1}(eh(x_{i}))\\\notag
=\sum_{i\in\mathbb{Z}}ep^{i}(h(x_{i}))
=eh'(\sum_{i\in\mathbb{Z}}x_{i})
$$
in which the second equality is because h is a $\mathfrak{g}$-module homomorphism, and the third is by a simple observation that $p^{i+1}(ey)=ep^{i}(y)$ for $y\in M^{i}$. Furthermore, since $f$ is a lifting, we have $fh=g$, so
$g(x_{i})=fh(x_{i})=f(h(x_{i}))$. Now since $f,\,g$ are graded maps and
$\displaystyle f(h(x_{i}))=\sum_{i\in\mathbb{Z}}f(p_{i}(h(x_{i})))\in L^{i}$, we see that $f(h(x_{i}))=f(p_{i}(h(x_{i})))$, Hence
$$fh'(\sum_{i\in\mathbb{Z}}x_{i})=\sum_{i\in\mathbb{Z}}f(p_{i}(h(x_{i})))=\sum_{i\in\mathbb{Z}}f((h(x_{i}))=fh(\sum_{i\in\mathbb{Z}}x_{i})=g(\sum_{i\in\mathbb{Z}}x_{i})
$$
so $h'$ is the required graded lifting. Then $M$ is in $\mathcal{P}$, completing the proof.
\end{proof}

Now for our projective graded module $P$, we cite the constructions of the localization of the DS functor in [DS]: Consider the trivial vector bundle (as an $\mathscr{O}_{X}$- module, temporarily forget the $\mathfrak{g}_{0}$-module structure) $\mathscr{O}_{X}\otimes P$  with fiber isomorphic to $P$. Let $\partial:\,\mathscr{O}_{X}\otimes P\rightarrow\mathscr{O}_{X}\otimes P$ be the map defined by
$$\partial\phi(x)=x\phi(x)
$$
for $x\in X=\mathcal{P}(\mathfrak{g}_{1}),\,\phi\in\mathscr{O}_{X}\otimes P$. Clearly $\partial^{2}=0$ and the cohomology $\mathcal{H}(P)=\rm{ker}\partial/\rm{im}\partial$ is coherent. Furthermore, one can easily check the relation
$$\mathcal{H}(P)=\bigoplus_{i\in\mathbb{Z}}H^{i}(L(P))
$$
in which $L(P)$ is the corresponding rigid complex for $P$. Now [DS, \S 11, 11.2 (1)] gives us the following lemma,
\begin{lem}
Let $M$ be a finite-dimensional $\mathfrak{g}$-module, then the support of $\mathcal{H}(M)$ is contained in $X_{M}$.
\end{lem}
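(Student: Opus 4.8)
The plan is to forget the $\mathfrak{g}_{0}$-structure entirely, since the support of a sheaf depends only on its underlying $\mathscr{O}_{X}$-module. Thus I regard $\mathcal{H}(M)$ as an ordinary coherent sheaf obtained as the cohomology of the $\mathscr{O}_{X}$-linear complex $\mathscr{O}_{X}\otimes M\xrightarrow{\partial}\mathscr{O}_{X}\otimes M$ (the twists by $\mathscr{O}_{X}(j)$ present in the rigid-complex description only change the sheaf by tensoring with a line bundle, hence are irrelevant for the support). The fibre of $\partial$ at a closed point $\bar{x}\in X$ is the endomorphism $x_{M}\colon M\to M$, and since $\bar{x}\in X$ means $[x,x]=0$, we have $x^{2}=\tfrac{1}{2}[x,x]=0$ in $\mathcal{U}(\mathfrak{g})$, so $x_{M}^{2}=0$ and $\operatorname{im}x_{M}\subseteq\ker x_{M}$. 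The goal is to prove that $\mathcal{H}(M)$ vanishes on the open complement $U:=X\setminus X_{M}$, which immediately yields $\operatorname{Supp}\mathcal{H}(M)\subseteq X_{M}$.

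The first step is a rank computation. Because $\operatorname{im}x_{M}\subseteq\ker x_{M}$ we always have $\operatorname{rank}x_{M}\le\dim\ker x_{M}=\dim M-\operatorname{rank}x_{M}$, so $\operatorname{rank}x_{M}\le\tfrac{1}{2}\dim M$ for every $\bar{x}\in X$. By the description of $X_{M}$ recalled in Section~2.1, a point $\bar{x}$ lies in $U$ precisely when $M_{x}=0$, equivalently when $\operatorname{rank}x_{M}=\tfrac{1}{2}\dim M$ is maximal. Since the rank of a morphism of locally free sheaves is lower semicontinuous, the maximal-rank locus $U$ is open and the function $\bar{x}\mapsto\operatorname{rank}x_{M}$ is constant, equal to $\tfrac{1}{2}\dim M$, on all of $U$.

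The second step invokes the constant-rank theorem for a morphism of vector bundles over the reduced variety $U$: where $\partial$ has constant rank $r=\tfrac{1}{2}\dim M$, both $\operatorname{im}\partial$ and $\ker\partial$ are subbundles of $\mathscr{O}_{U}\otimes M$, of ranks $r$ and $\dim M-r=r$ respectively, and their fibres at each $\bar{x}$ are exactly $\operatorname{im}x_{M}$ and $\ker x_{M}$. As these two subspaces coincide when $\bar{x}\in U$, the inclusion $\operatorname{im}\partial\hookrightarrow\ker\partial$ is an isomorphism of subbundles on $U$ (an inclusion of locally free sheaves of equal rank whose fibrewise map is an isomorphism is itself an isomorphism, e.g.\ by Nakayama applied to its determinant). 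Hence $\mathcal{H}(M)|_{U}=\ker\partial/\operatorname{im}\partial=0$, as desired.

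I expect the only genuine obstacle to be the justification of the constant-rank step, namely that on the locus of maximal rank the kernel and image are honest subbundles with the expected fibres; this is standard once one knows $U$ is reduced (we identify $X$ with its reduced scheme throughout), but it must be stated carefully, and one should verify that passing to fibres commutes with forming $\ker$ and $\operatorname{im}$ precisely under the constant-rank hypothesis. Everything else reduces to lower semicontinuity of rank and a Nakayama argument.
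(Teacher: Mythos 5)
Your argument is correct, but it cannot be compared line-by-line with the paper's, because the paper supplies no proof of this lemma at all: it is imported verbatim from [DS, \S 11, 11.2 (1)] and used as a black box. What you have written is essentially the standard argument underlying that reference, and it holds up. The two pillars are exactly right: (i) since $x_M^2=\tfrac{1}{2}[x,x]_M=0$ forces $\operatorname{im}x_M\subseteq\ker x_M$, the complement $U=X\setminus X_M$ is precisely the locus where $\operatorname{rank}x_M$ attains its maximal possible value $\tfrac{1}{2}\dim M$ (this matches the paper's own description of $X_M$ in Section 2.1), so lower semicontinuity of fibre rank makes $U$ open with constant rank there; (ii) over a reduced base, constant fibre rank makes $\operatorname{coker}\partial$ locally free (apply [HR, Ch.\ II, Ex.\ 5.8(c)] on $U$), hence $\operatorname{im}\partial$ and $\ker\partial$ are subbundles whose fibres are the naive $\operatorname{im}x_M$ and $\ker x_M$, and an inclusion of subbundles of equal rank that is bijective on every fibre is an isomorphism by Nakayama, killing $\mathcal{H}(M)|_U$. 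Your reductions are also legitimate: support is insensitive to the $\mathfrak{g}_0$-action and to the line-bundle twists, and the scalar ambiguity in choosing a representative $x$ of $\bar{x}$ changes $x_M$ only up to a nonzero scalar, which affects neither rank nor kernel nor image; the reducedness needed in step (ii) is supplied by the paper's blanket convention of identifying varieties with their reduced schemes. The one step you rightly flag, that kernel and image commute with passage to fibres precisely under the constant-rank hypothesis, is the only place requiring care, and it follows from the local splitting of the surjection onto the locally free cokernel. In short, your proposal is a complete, self-contained replacement for the citation, which is arguably a service to the reader since the paper leans on this lemma to show $\Phi$ kills projectives.
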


Now we can easily proof the our statement: For an object in $\mathcal{P}$, by lemma 5.2, it is also projective in $\mathcal{F}(\mathfrak{g})$. Then by lemma 5.1, its projective associated variety $X_{M}=\varnothing$. Hence according to lemma 5.3, the sheaf $\mathcal{H}(P)=0$. Then by remark above, $\bigoplus_{i\in\mathbb{Z}}H^{i}(L(P))=\mathcal{H}(P)=0$, so $H^{i}(L(P))$ for all $i\in\mathbb{Z}$. This means $L(P)$ is acyclic, hence quasi-isomorphic to zero complex. Therefore any morphism factoring through such a projective object will be sent to 0 by $\Phi$, which means $\Phi$ factors through $\mathcal{M}^{b}(\mathfrak{g})/\mathcal{P}$, inducing a functor $\tilde{\Phi}:\mathcal{M}^{b}(\mathfrak{g})/\mathcal{P}\rightarrow\mathcal{D}^{b}(\mathfrak{g}_{0}\mathfrak{Coh}(X))$.

\subsection{The kernel of $\Phi$ on morphisms}
Consider the composition of functors $\Psi:=\mathscr{R}_{X}\otimes_{\mathscr{O}_{X}}F(\cdot)$, where $F$ is the forgetful functor $\mathfrak{g}_{0}\mathfrak{Coh}(X)\rightarrow\mathfrak{Qco}(X)$. One see that $\Psi$ is an exact functor from $\mathfrak{g}_{0}\mathfrak{Coh}(X)$ to itself, so it naturally extend to a derived functor $\tilde{\Psi}:\mathcal{D}^{b}(\mathfrak{g}_{0}\mathfrak{Coh}(X))\rightarrow\mathcal{D}^{b}(\mathfrak{g}_{0}\mathfrak{Coh}(X))$ with the following commutative diagram:
\begin{center}
\begin{tikzcd}
\mathcal{M}^{b}(\mathfrak{g})\arrow[d,"q'"]\arrow[dr,"\Phi"]\arrow[r,"L"] &\mathfrak{g}_{0}\mathfrak{Coh}(X)\arrow[r,"\Psi"]\arrow[d,"q"] & \mathfrak{g}_{0}\mathfrak{Coh}(X)\arrow[d,"q"]\\ 
\mathcal{M}^{b}(\mathfrak{g})/\mathcal{P}\arrow[r,"\tilde{\Phi}"] &\mathcal{D}^{b}(\mathfrak{g}_{0}\mathfrak{Coh}(X))\arrow[r,"\tilde{\Psi}"] & \mathcal{D}^{b}(\mathfrak{g}_{0}\mathfrak{Coh}(X))
\end{tikzcd}
\end{center}
To prove the faithfulness of $\tilde{\Phi}$, it suffice to show that the composition $\tilde{\Psi}\circ\tilde{\Phi}$ is faithful.

For a morphism $\varphi: V\rightarrow W$ in $\mathcal{M}^{b}(\mathfrak{g})$,  suppose $\tilde{\varphi}=\tilde{\Psi}\circ\Phi(\varphi)=0$, we firstly show that this implies $\varphi=0$ if we impose an additional condition on the $\mathfrak{g}$-module $W$, namely $\Lambda^{n}(\mathfrak{g}_{1})W=0$. Such $W$'s will be called \emph{reduced} modules.

For convenience, we write $\mathcal{D}^{b}=\mathcal{D}^{b}(\mathfrak{g}_{0}\mathfrak{Coh}(X))$ and $R=\Psi\circ L$, then $\tilde{\Psi}\circ\Phi$ is actually $q\circ R$. Now suppose $i$ is the largest integer with $V^{i}\oplus W^{i}\neq 0$, then the morphism $\varphi$ gives a morphism of two distinguished triangle:
\begin{center}
\begin{tikzcd}
R(V)^{i}[-i]\arrow[r]\arrow[d,"\tilde{\varphi}^{i}"] & R(V)\arrow[r]\arrow[d,"\tilde{\varphi}"] & R(V)'\arrow[d,"\tilde{\varphi'}"]\\ 
R(W)^{i}[-i]\arrow[r] & R(W)\arrow[r] & R(W)'
\end{tikzcd}
\end{center}
in which $R(V)^{i}$, $R(W)^{i}$ are complexes with the $i^{th}$ term of $R(V)$, $R(W)$ concentrated on degree 0. $R(V)'$ and $R(W)'$ are defined by truncation: $\sigma_{<i}R(V)$ and $\sigma_{<i}R(W)$. $\tilde{\varphi}^{i}$ and $\varphi'$ are the morphisms in derived category induced by the restrictions of $\varphi$ on $V^{i}$ and $V/V^{i}$. If there exists a different morphism $\phi$ such that $\tilde{\Psi}\circ\Phi(\phi)=\tilde{\Psi}\circ\Phi(\varphi)$, then $\phi$ also gives such a morphism of triangles with the middle morphism coincides. In other words, both extensions $\varphi^{i}$ and $\phi^{i}$ makes the left part of the diagram commute. Then by the long exact sequence induced by $\mathrm{Hom}(R(V)^{i}[-i],\cdot)$ 
(See [GM, Ch.IV, \S 1, 1.3]), we see that $\tilde{\varphi^{i}}$ and $\tilde{\phi^{i}}$ differ by an element that can be factored through $R(V)^{i}[-i]\rightarrow R(W)'[-1]\rightarrow R(W)^{i}[-i]$.

Let us show that if $W$ is reduced, then $\mathrm{Hom}_{\mathcal{D}^{b}}(R(V)^{i}[-i], R(W)'[-1])$ acts trivially on the set of extensions $R(V)^{i}[-i]\rightarrow R(W)^{i}[-i]$. Suppose $m$ is the largest integer such that $W^{m}\neq 0$, then $m\leq i$. Consider the triangle:
$$R(W)^{m}[-m-1]\rightarrow R(W)'[-1]\rightarrow R(W)''[-1]
$$
in which $R(W)''=\sigma_{<m}R(W)'$, we firstly notice that, 
$$\mathrm{Hom}_{\mathcal{D}^{b}}(R(V)^{i}[-i],R(W)^{m}[-m-1])=\mathrm{Ext}^{i-m-1}_{\mathfrak{g}_{0}\mathfrak{Qco}(X)}(\mathscr{R}_{X}(i),\mathscr{R}_{X}(m))\otimes_{k}\mathrm{Hom}_{k}(V^{i},W^{m})
$$
Since $m<i$, by Theorem 3.1, This is nonzero if and only if $i=m+n$. Splitting off the components of $R(W)'[-1]$ one by one (starting from the right) as above, we see that $\mathrm{Hom}_{\mathcal{D}^{b}}(R(V)^{i}[-i], R(W)'[-1])$ is a quotient of $\mathcal{U}(\mathfrak{g}_{0})\otimes_{k}\mathrm{Hom}_{k}(V^{i}, W^{i-n}\otimes (\Lambda^{n}\mathfrak{g}_{1}))$, and an elemnt of this latter Hom yields the morphism $V^{i}\rightarrow W^{i}$ equal to its composition with the $\Lambda(\mathfrak{g}_{1})$ multiplication induced by $\mathfrak{g}$-module structure: $W^{i-n}\otimes \Lambda^{n}(\mathfrak{g}_{1})$, and this determines the extension $R(V)^{i}[-i]\rightarrow R(W)^{i}[-i]$. If $W$ is reduced, one see that the multiplication $W^{i-n}\otimes \Lambda^{n}(\mathfrak{g}_{1})$ is zero, so $\tilde{\varphi}^{i}$ is determined uniquely. In particular, if $\tilde{\varphi}=0$, then $R(\varphi^{i}): R(V)^{i}\rightarrow R(W)^{i}$ is a zero map (for complexes concentrated in zero degree, the morphism set in derived category coincides with the morphisms between complexes).

But if $W$ is reduced, then $W'=W/W^{m}$ is also reduced, so by induction, we see that $R(\varphi): R(V)\rightarrow R(W)$ is a zero map. Now $R(\varphi)=\mathscr{R}_{X}\otimes_{\mathscr{O}_{X}}F(\varphi)=\mathcal{U}(\mathfrak{g}_{0})\otimes_{k}L(\varphi)$ as morphisms between sheaves of abelian groups, so this would also implies $L(\varphi)=0$. However, $L$ is a equivalence of categories, so $\varphi=0$.

Now all we need is the following lemma
\begin{lem}
Suppose $V$ is a finite-dimensional 
 $\mathfrak{g}$-module $\mathrm{(}[\mathfrak{g}_{1},\mathfrak{g}_{1}]=0\mathrm{)}$. Now if $\mathfrak{g}_{0}$ is semisimple, then there exist a projective object $P$ in $\mathcal{F}(\mathfrak{g})$ and a reduced module $M$ such that $V\cong P\oplus M$ as $\mathfrak{g}$-modules.
\end{lem}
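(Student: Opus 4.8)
The plan is to isolate the projective summand using the action of the top element of the exterior algebra. Fix a basis $e_{1},\dots,e_{n}$ of $\mathfrak{g}_{1}$ (with $n=\dim\mathfrak{g}_{1}$) and set $\omega=e_{1}\cdots e_{n}\in\mathcal{U}(\mathfrak{g})$, which spans the image of $\Lambda^{n}(\mathfrak{g}_{1})$ in $\mathcal{U}(\mathfrak{g})$; since $[\mathfrak{g}_{1},\mathfrak{g}_{1}]=0$ we have $e_{i}^{2}=0$, hence $\omega^{2}=0$ and $e_{i}\omega=\omega e_{i}=0$ for all $i$. The first key observation is that $\omega$ is central with respect to $\mathfrak{g}_{0}$: for $x\in\mathfrak{g}_{0}$ one computes $[x,\omega]=\mathrm{tr}\!\big(\mathrm{ad}(x)|_{\mathfrak{g}_{1}}\big)\,\omega$, and this trace vanishes because $\mathfrak{g}_{0}=[\mathfrak{g}_{0},\mathfrak{g}_{0}]$ forces every element to act with trace zero. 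Consequently the operator $\omega_{V}\in\mathrm{End}_{k}(V)$ is a morphism of $\mathfrak{g}_{0}$-modules, both $\mathrm{im}\,\omega_{V}$ and $\ker\omega_{V}$ are $\mathfrak{g}_{0}$-submodules, $\mathrm{im}\,\omega_{V}\subseteq\ker\omega_{V}$, and $\mathfrak{g}_{1}\cdot\mathrm{im}\,\omega_{V}=0$. A module is \emph{reduced} precisely when $\omega$ acts as zero, so reducedness is exactly the vanishing of $\omega_{V}$.

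Since $\mathfrak{g}_{0}$ is semisimple, $V$ is completely reducible as a $\mathfrak{g}_{0}$-module, so I would choose a $\mathfrak{g}_{0}$-submodule $U\subseteq V$ with $V=U\oplus\ker\omega_{V}$; then $\omega_{V}|_{U}$ is injective and $\mathrm{im}\,\omega_{V}=\omega U$. Let $P$ be the $\mathfrak{g}$-submodule of $V$ generated by $U$. The natural surjection $\mathrm{Ind}(U):=\mathcal{U}(\mathfrak{g})\otimes_{\mathcal{U}(\mathfrak{g}_{0})}U\to P$ is, I claim, an isomorphism. Restricting scalars to $\Lambda(\mathfrak{g}_{1})$, PBW gives $\mathrm{Ind}(U)\cong\Lambda(\mathfrak{g}_{1})\otimes_{k}U$, a free $\Lambda(\mathfrak{g}_{1})$-module; as $\Lambda(\mathfrak{g}_{1})$ is a finite-dimensional local Frobenius algebra with socle $\Lambda^{n}(\mathfrak{g}_{1})=k\omega$, the socle of $\mathrm{Ind}(U)$ is $\omega\otimes U$, on which the map to $V$ is injective (because $\omega_{V}|_{U}$ is). Any nonzero submodule of the kernel would meet this socle, so the map is injective and $P\cong\mathrm{Ind}(U)$; as $U$ is projective in the semisimple category $\mathcal{F}(\mathfrak{g}_{0})$ and induction is left adjoint to the exact restriction functor, $P$ is projective in $\mathcal{F}(\mathfrak{g})$.

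It remains to split $P$ off with a reduced complement. Here I would use that $\mathcal{U}(\mathfrak{g}_{0})\hookrightarrow\mathcal{U}(\mathfrak{g})$ is a Frobenius extension, the Frobenius form being the projection onto the $\omega$-component, which is well defined as a $\mathcal{U}(\mathfrak{g}_{0})$-bimodule map precisely because $\omega$ is $\mathfrak{g}_{0}$-central. This identifies induction with coinduction, so $P\cong\mathrm{Coind}(U)=\mathrm{Hom}_{\mathcal{U}(\mathfrak{g}_{0})}(\mathcal{U}(\mathfrak{g}),U)$, which is finite-dimensional of dimension $2^{n}\dim U$; since coinduction is right adjoint to exact restriction it preserves injectives, and $U$ is injective in the semisimple category $\mathcal{F}(\mathfrak{g}_{0})$, whence $P$ is injective in $\mathcal{F}(\mathfrak{g})$. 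Therefore the inclusion $P\hookrightarrow V$ splits, giving $V=P\oplus M$ for a $\mathfrak{g}$-submodule $M$. Finally $M$ is automatically reduced: $\omega M\subseteq\mathrm{im}\,\omega_{V}=\omega U\subseteq P$ while $\omega M\subseteq M$, so $\omega M\subseteq P\cap M=0$.

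The routine steps are the bracket computation for the centrality of $\omega$ and the socle argument. The main obstacle, and the place I would spend the most care, is the splitting in the last paragraph, i.e.\ proving that modules induced from $\mathfrak{g}_{0}$ are injective in $\mathcal{F}(\mathfrak{g})$; equivalently, that $\mathcal{F}(\mathfrak{g})$ is a Frobenius category in which projectives and injectives coincide. Verifying the Frobenius-extension structure explicitly, namely that the $\omega$-coefficient map is a genuine Frobenius homomorphism and that coinduction stays inside finite-dimensional modules, is the technical heart; once it is in place, the decomposition $V=P\oplus M$ and the reducedness of $M$ follow formally.
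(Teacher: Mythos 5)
Your proposal is correct and follows essentially the same route as the paper: you split $V=U\oplus\ker\omega_{V}$ as $\mathfrak{g}_{0}$-modules using semisimplicity (your $\ker\omega_{V}$ is exactly the paper's submodule $K$), embed the induced module $\mathcal{U}(\mathfrak{g})\otimes_{\mathcal{U}(\mathfrak{g}_{0})}U$ into $V$, and invoke the coincidence of induction with coinduction (the paper writes out the Frobenius isomorphism onto $\mathrm{Hom}_{\mathcal{U}(\mathfrak{g}_{0})}(\mathcal{U}(\mathfrak{g}),\Lambda^{n}(\mathfrak{g}_{1})\otimes U)$ explicitly) to get injectivity and hence the splitting $V=P\oplus M$. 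The only cosmetic differences are your socle argument for injectivity of $\mathrm{Ind}(U)\to V$ in place of the paper's minimal-degree monomial computation, and your direct observation $\omega M\subseteq P\cap M=0$ in place of the paper's dimension count.
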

\begin{proof}
Suppose $e_{1},\,e_{2},\,...,\,e_{n}$ is a vector space basis for $\mathfrak{g}_{1}$, then one can easily see that the set
$$K=\{v\in V|e_{1}e_{2}...e_{n}v=0\}
$$
is a $\mathfrak{g}$-submodule of V. Since $\mathfrak{g}_{0}$ is semisimple, by Weyl's theorem, there exist a $\mathfrak{g}_{0}$-submodule  $Q\subseteq V$ such that  $Q\cap K=\varnothing$, $V=Q\oplus K$ as $\mathfrak{g}_{0}$-modules. 
Now consider the induced module $\mathcal{U}(\mathfrak{g})\otimes_{\mathcal{U}(\mathfrak{g}_{0})} Q$, we have a $\mathfrak{g}$-module homomorphism $\mathcal{U}(\mathfrak{g})\otimes_{\mathcal{U}(\mathfrak{g}_{0})} Q\rightarrow V$ defined by $x\otimes q\rightarrow x.q$. This map is indeed an injection: If $\sum_{i=1}^{m}\lambda_{i}\otimes q_{i}$ is sent to zero, in which $q_{i}\in Q,\,q_{i}\neq 0$ and $\lambda_{i}$'s are different monomials of the form $e_{i_{1}}e_{i_{2}}...e_{i_{l}}$ in $\Lambda(\mathfrak{g}_{1})$, $e_{i}\in \mathfrak{g}_{1}$. we have  $\sum_{i=1}^{m}\lambda_{i}.q_{i}=0$. Without loss of generality, suppose $\lambda_{1}=e_{1}e_{2}...e_{d}$ is of the minimal degree $d$, by multiplying $e_{d+1}e_{d+2}...e_{n}$, we see that $e_{1}e_{2}...e_{n}.q_{1}=0$, then $q_{1}\in K$. Since $q_{1}\neq 0$, this contradicts to the fact that $Q\cap K=\varnothing$. Hence we can identify $\mathcal{U}(\mathfrak{g})\otimes_{\mathcal{U}(\mathfrak{g})} Q$ as a submodule of $V$.

Now there is a isomorphism of $\mathfrak{g}$-modules
$$\mathcal{U}(\mathfrak{g})\otimes_{\mathcal{U}(\mathfrak{g}_{0})} Q\cong\mathrm{Hom}_{\,\mathcal{U}(\mathfrak{g}_{0})}(\mathcal{U}(\mathfrak{g}),\Lambda^{n}(\mathfrak{g}_{1})\otimes Q)
$$
Let we describe this. For $m\in \mathcal{U}(\mathfrak{g})\otimes_{\mathcal{U}(\mathfrak{g}_{0})} Q$, we define a morphism $f(m):\mathcal{U}(\mathfrak{g})\rightarrow\Lambda^{n}(\mathfrak{g}_{1})\otimes Q$ by $f(m)(e)=(em)^{n}$, here $(em)^{n}$ is the image of $em$ by the $\mathfrak{g}_{0}$-projection to the $n^{th}$ component: $\mathcal{U}(\mathfrak{g})\otimes_{\mathcal{U}(\mathfrak{g}_{0})} Q\rightarrow\Lambda^{n}(\mathfrak{g}_{1})\otimes Q$. 

Conversely, for $\beta\in\mathrm{Hom}_{\,\mathcal{U}(\mathfrak{g}_{0})}(\mathcal{U}(\mathfrak{g}),\Lambda^{n}(\mathfrak{g}_{1})\otimes Q)$, we define an element $g(\beta)=\sum\beta(\lambda_{i})\bar{\lambda_{i}}$. Here $\lambda_{i}$'s are all monomials in $e_{1},e_{2},...e_{n}$ and for $\lambda_{i}=e_{i_{1}}e_{i_{2}}...e_{i_{l}}$, we define $\bar{\lambda_{i}}$ to be $e_{i_{1}'}e_{i_{2}'}...e_{i_{n-l}'}$ in which $\{i_{1}',i_{2}',...,i_{n-l}'\}$ is the complement of $\{i_{1},i_{2},...,i_{l}\}$. It's straight to check $f$ is a $\mathfrak{g}$-module homomorphism and $fg=gf=1$, so $f$ is actually an isomorphism, which gives the relation above.

Since the functor $\mathrm{Hom}_{\,\mathcal{U}(\mathfrak{g}_{0})}(\mathcal{U}(\mathfrak{g}),\cdot)$ is right adjoint to the forgetful functor, it preserves injectives. Since $\mathfrak{g}_{0}$ is semisimple, we see that $\mathcal{U}(\mathfrak{g})\otimes_{\mathcal{U}(\mathfrak{g}_{0})} Q$ is injective in $\mathcal{F}(\mathfrak{g})$. Then the injection $\mathcal{U}(\mathfrak{g})\otimes_{\mathcal{U}(\mathfrak{g})} Q\rightarrow V$ splits as $V\cong(\mathcal{U}(\mathfrak{g})\otimes_{\mathcal{U}(\mathfrak{g})} Q)\oplus M$ for some submodule $M$. Now since $\mathcal{U}(\mathfrak{g})\otimes_{\mathcal{U}(\mathfrak{g})} \cdot$ is left adjoint to forgetful functor, it preserves projectives, hence $\mathcal{U}(\mathfrak{g})\otimes_{\mathcal{U}(\mathfrak{g})}$ is projective in $\mathcal{F}(\mathfrak{g})$. Finally, $dim_{k}\,\Lambda^{n}(\mathfrak{g}_{1})\otimes Q=dim_{K}\,Q=dim_{k}\Lambda^{n}(\mathfrak{g}_{1})V$ by definition of $K$ and $V=Q\oplus K$, we see that $\Lambda^{n}(\mathfrak{g}_{1})M=0$, hence $M$ is reduced, completing the proof.
\end{proof}

Now with this lemma, we can complete the proof. Let $\varphi:V\rightarrow W,\, W=P\oplus M$ with $P$ free, $M$ reduced, and let $\tilde{\Psi}\circ\Phi(\varphi)=0$. By arguments in the beginning of this subsection, the composition $V\rightarrow W\rightarrow M$ is zero. Therefore, $\varphi$ is of the form $V\rightarrow P\rightarrow W$, so $\varphi=0$ in the stable category, completing the proof.

\section{Additional Topics}

In these two sections, we give a reason for why $\tilde{\Phi}$ is not full and generalize our method to an arbitrary lie superalgebra.

\subsection{Why functor $\tilde{\Phi}$ is not full}
In [GM, Ch.IV, \S 3.9], it is shown that the functor $\tilde{\Phi}$ is full when $\mathfrak{g}_{0}=0$. However, in general case, We do have a reason to believe that the fullness of this functor fail due to the lack of semisimplicity of the \,$\mathcal{U}(\mathfrak{g}_{0})$-module (not necessarily finite dimensional) category. 

Indeed, if $\tilde{\Phi}$ is full, given two modules $V$ and $W$ that concentrate on different degree, Since there are no nonzero morphism between $V$ and $W$ in $\mathcal{M}^{b}(\mathfrak{g})$, we should have $\mathrm{Hom}_{\mathcal{D}^{b}}(\Phi(V),\Phi(W))=0$. Now suppose $V=V^{i}$, $W=W^{j}$, $i\neq j$, we will have:
\begin{align}\notag
\mathrm{Hom}_{\mathcal{D}^{b}}(\Phi(V),\Phi(W))&=\mathrm{Hom}_{\mathcal{D}^{b}}(\mathscr{O}_{X}(i)\otimes_{k}V[-i],\mathscr{O}_{X}(j)\otimes_{k}W[-j])\\\notag
&=\mathrm{Ext}_{\mathscr{R}_{X}}^{i-j}(\mathscr{O}_{X}(i)\otimes_{k}V,\mathscr{O}_{X}(j)\otimes_{k}W)
\end{align}
Now there is a canonical isomorphism of $\mathscr{R}_{X}$-modules:
$$\mathscr{O}_{X}\otimes_{k}V=\mathscr{O}_{X}\otimes_{k}\overline{\mathcal{U}(\mathfrak{g}_{0})}\otimes_{\overline{\mathcal{U}(\mathfrak{g}_{0})}}V=\mathscr{R}_{X}\otimes_{\overline{\mathcal{U}(\mathfrak{g}_{0})}}V
$$
in which \,$\overline{\mathcal{U}(\mathfrak{g}_{0})}$ is the constant sheaf of rings \,$\mathcal{U}(\mathfrak{g}_{0})$. To compute the cohomology, we notice a pair of adjoint functors:

$$\mathrm{Hom}_{\mathscr{R}_{X}}(\mathscr{R}_{X}\otimes_{\overline{\mathcal{U}(\mathfrak{g}_{0})}}\mathscr{F},\mathscr{G})=\mathrm{Hom}_{\overline{\mathcal{U}(\mathfrak{g}_{0})}}(\mathscr{F},\mathscr{G})
$$

Since the functor $\mathscr{R}_{X}\otimes_{\overline{\mathcal{U}(\mathfrak{g}_{0})}}$ can be identified as $\mathscr{O}_{X}\otimes_{k}$ as sheaves of abelian groups, it is exact. Hence, the forgetful functor from $\mathscr{R}_{X}$-module to $\overline{\mathcal{U}(\mathfrak{g}_{0})}$-module is right adjoint to a exact functor, then it preserves injectives. On the other hand, the forgetful functor is exact itself, so the adjointness induces relations of derived functors:
$$\mathrm{Ext}^{k}_{\mathscr{R}_{X}}(\mathscr{R}_{X}\otimes_{\overline{\mathcal{U}(\mathfrak{g}_{0})}}\mathscr{F},\mathscr{G})=\mathrm{Ext}^{k}_{\overline{\mathcal{U}(\mathfrak{g}_{0})}}(\mathscr{F},\mathscr{G})
$$
Now if  $\mathfrak{F}$ is the constant sheaf of $\mathfrak{g}_{0}$-modules $V$, we can explicitly compute this cohomology, as follows: the functors $\mathrm{Ext}^{k}_{\overline{\mathcal{U}(\mathfrak{g}_{0})}}(V,\cdot)$ is the derived functor of $\mathrm{Hom}_{\overline{\mathcal{U}(\mathfrak{g}_{0})}}(V,\cdot)$. Now another pair of adjoint functor tells us
$$\mathrm{Hom}_{\overline{\mathcal{U}(\mathfrak{g}_{0})}}(V,\mathscr{G})=\mathrm{Hom}_{\mathcal{U}(\mathfrak{g}_{0})}(V,\Gamma(X,\mathscr{G}))
$$
Hence the functor $\mathrm{Hom}_{\overline{\mathcal{U}(\mathfrak{g}_{0})}}(V,\cdot)$ is actually the composition of the global section functor and the $\mathrm{Hom}$ functor $\mathrm{Hom}_{\mathcal{U}(\mathfrak{g}_{0})}(V,\cdot)$ in the \,$\mathcal{U}(\mathfrak{g}_{0})$-module category. The above adjointness reveals that the global section functor is right adjoint to the exact constant sheaf functor, so it preserves injectives. By [WC, \S 5.8, 5.8.2], we have a convergent Grothendieck spectral sequence:
$$^{I}E_{2}^{pq}=Ext^{p}_{\mathcal{U}(\mathfrak{g}_{0})}(V,H^{q}(\mathscr{G}))\Rightarrow Ext_{\overline{\mathcal{U}(\mathfrak{g}_{0})}}^{p+q}(V,\mathscr{G})
$$

Summarizing above, when $i>j$, Note that the cohomology of $\mathscr{O}_{X}(j-i)$ vanishes for all degree but $n-1$, we have the following computations:
\begin{align}\notag
\mathrm{Ext}_{\mathscr{R}_{X}}^{i-j}(\mathscr{O}_{X}(i)\otimes_{k}V,\mathscr{O}_{X}(j)\otimes_{k}W)&=\mathrm{Ext}_{\mathscr{R}_{X}}^{i-j}(\mathscr{R}_{X}(i)\otimes_{\overline{\mathcal{U}(\mathfrak{g}_{0})}}V,\mathscr{R}_{X}(j)\otimes_{\overline{\mathcal{U}(\mathfrak{g}_{0})}}W)\\\notag
&=\mathrm{Ext}_{\mathscr{R}_{X}}^{i-j}(\mathscr{R}_{X}\otimes_{\overline{\mathcal{U}(\mathfrak{g}_{0})}}V,\mathscr{R}_{X}(j-i)\otimes_{\overline{\mathcal{U}(\mathfrak{g}_{0})}}W)\\\notag
&=\mathrm{Ext}_{\overline{\mathcal{U}(\mathfrak{g}_{0})}}^{i-j}(V,\mathscr{R}_{X}(j-i)\otimes_{\overline{\mathcal{U}(\mathfrak{g}_{0})}}W)\\\notag
&=\mathrm{Ext}_{\overline{\mathcal{U}(\mathfrak{g}_{0})}}^{i-j}(V,\mathscr{O}_{X}(j-i)\otimes_{k}W)\\\notag
&=\mathrm{Ext}_{{\mathcal{U}(\mathfrak{g}_{0})}}^{i-j-n+1}(V,H^{n-1}(\mathscr{O}_{X}(j-i)\otimes_{k}W))\\\notag
&=\mathrm{Ext}_{{\mathcal{U}(\mathfrak{g}_{0})}}^{i-j-n+1}(V,S^{i-j-n}(\mathfrak{g}_{1})\otimes_{k}W))
\end{align}

Now we can easily come up with a example where this cohomology does not vanish. Let $\mathfrak{g}_{0}=\mathfrak{sl}(2,k)$ and $\mathfrak{g}_{1},\,V,\,W$ be trivial $\mathfrak{g}_{0}$-modules. By [WC, \S 7.7, Exercise 7.7.3], $H^{3}(\mathfrak{sl}(2,k),k)=k$, Hence:
\begin{align}\notag
\mathrm{Ext}_{{\mathcal{U}(\mathfrak{g}_{0})}}^{i-j-n+1}(V,S^{i-j-n}(\mathfrak{g}_{1})\otimes_{k}W))&=V\otimes_{k}S^{i-j-n}(\mathfrak{g}_{1})\otimes_{k}W\otimes_{k}H^{3}(\mathfrak{sl}(2,k),k)\\\notag
&=V\otimes_{k}S^{i-j-n}(\mathfrak{g}_{1})\otimes_{k}W
\end{align}
which does not vanish. 

\subsection{$\mathfrak{g}_{0}$-geometry in nongraded case}
For a general lie superalgebra $\mathfrak{g}=\mathfrak{g}_{0}\oplus\mathfrak{g}_{1}$ and a $\mathfrak{g}$-module V, there is a morphism $V\rightarrow V\otimes_{k}\mathfrak{g}_{1}^{\ast}$ induced by the $\mathfrak{g}_{1}$-action, hence gives a morphism between sheaves:
$d:\,\mathscr{O}_{\mathcal{P}(\mathfrak{g}_{1})}\otimes V\rightarrow\mathscr{O}_{\mathcal{P}(\mathfrak{g}_{1})}(1)\otimes V$. Restricting to the projective self-commuting cone, we will get a complex:
\begin{center}
\begin{tikzcd}
...\arrow[r] & \mathscr{O}_{X}(i-1)\otimes V \arrow[r,"d(i-1)"] & \mathscr{O}_{X}(i)\otimes V \arrow[r,"d(i)"] & \mathscr{O}_{X}(i+1)\otimes V\arrow[r] & ... 
\end{tikzcd}
\end{center}

Actually, this gives a functor $\mathcal{F}(\mathfrak{g})\rightarrow\mathcal{C}h(\mathfrak{g}_{0}\mathfrak{Coh}(X))$. When compositing with localization functor $\mathcal{C}h(\mathfrak{g}_{0}\mathfrak{Coh}(X))\rightarrow\mathcal{D}^{b}(\mathfrak{g}_{0}\mathfrak{Coh}(X))$, we see that if $\mathfrak{g}$ is quasi-reductive, it still factors through the stable representation category: For a projective object $P$ in $\mathcal{F}(\mathfrak{g})$, by lemma 5.1, the projective associated variety $X_{M}=\varnothing$. Then by lemma 5.3, the cohomology of the complex is supported in $X_{M}=\varnothing$, hence vanishes everywhere. Then the complex is acyclic, hence quasi-isomorphic to $0$. Hence any morphism factoring through projective objects will be sent to zero.

The reason why faithfulness may fail in this case is that the sheaf cohomology for $\mathfrak{R}_{X}$ or more specifically for $\mathfrak{O}_{X}$ doesn't have a satisfying vanishing theorem, so the generally we don't have such an elegant way to compute morphisms in derived category.

This complex is sometimes really useful when restricted to some subvariety. For example, as [DS, \S 11, 11.3] mentioned, for $\mathfrak{g}=\mathfrak{gl}(m|n)$, write $\mathfrak{g}=\mathfrak{g}^{-1}\oplus\mathfrak{g}^{0}\oplus\mathfrak{g}^{1}$ in which $\mathfrak{g}^{1}$ and $\mathfrak{g}^{-1}$ are the upper and lower quasi-triangular matrices. Then the complex of global sections when restricted to $\mathcal{P}(\mathfrak{g}^{1})$
$$
0\rightarrow  V \rightarrow (\mathfrak{g}^{1})^{\ast}\otimes V \rightarrow S^{2}(\mathfrak{g}^{1})^{\ast}\otimes V\rightarrow ...\rightarrow S^{p}(\mathfrak{g}^{1})^{\ast}\otimes V\rightarrow ...
$$
is nothing else but the Koszul complex computing the cohomology $H^{\cdot}(\mathfrak{g}_{1},V)$, which plays an important role in the Kazhdan-Luszting theory for $\mathcal{F}(GL(m|n))$, as [DS, \S 11, 11.3] remarked.

\end{document}